\definecolor{purple}{rgb}{0.65, 0, 0.9}
\definecolor{orange}{rgb}{1,.5,0}
\def\@abssec#1{\vspace{.05in}\footnotesize \parindent .2in
{\bf #1. }\ignorespaces}
\newtheorem{theorem}{Theorem}[section]
\newtheorem{lemma}[theorem]{Lemma}
\newtheorem{proposition}[theorem]{Proposition}
\newtheorem{remark}[theorem]{Remark}
\def \R {\mathbb R}
\def \N {\mathbb N}
\newcommand{\be}{\mathbf e}
\allowdisplaybreaks \numberwithin{equation}{section}
\renewcommand{\be}{\begin{equation}}
\newcommand{\ee}{\end{equation}}
\title[Two-species competition model with chemotaxis]{Two-species competition model with chemotaxis:
well-posedness, stability and dynamics}
\author{Guanlin Li}
\author{Yao Yao}
\date{}
\begin{document}

\maketitle

\begin{abstract}We study a system of PDEs modeling the population dynamics of two competitive species whose spatial movements are governed by both diffusion and \emph{mutually repulsive} chemotaxis effects. We prove that solutions to this system are globally well-posed, without any smallness assumptions on the chemotactic coefficients. Moreover, in the weak competition regime, we prove that neither species can be driven to extinction as the time goes to infinity, regardless of how strong the chemotaxis coefficients are. Finally, long-time behaviors of the system are studied both analytically in the weakly nonlinear regime, and numerically in the fully nonlinear regime.
\end{abstract}

\section{Introduction}

\subsection{Background}In this paper, we consider two biological species that compete for resources, with their spatial movements governed by diffusion and \emph{mutually repulsive} chemotaxis effects. 
 Let us denote by $u(x,t)$ and $v(x,t)$ the population densities of the two species, and let the spatial domain be a bounded domain $\Omega \subset \mathbb{R}^n$ with smooth boundary, where $n\geq 1$. 
 The time evolution of $(u(x,t),v(x,t))$ is governed by the following three effects: competition, diffusion, and chemotactic movements.  Before stating the full system of equations, we will discuss each of the three effects  and briefly review the relevant literature.

(1) \emph{Competition between the two species}. The competition effect will be modeled by a Lotka--Volterra-type system \cite{Lotka, volterra} for competing species. If $u$ and $v$ are spatially homogeneous (thus they are only functions of $t$),  after a normalization by suitable multiplicative constants, their time evolution is described by the following system of ODEs
\begin{equation}\label{ode_system}
\begin{cases}
\dot{u}(t) = b_1 u(1-u-a_1 v),\\
\dot{v}(t) = b_2 v (1-v-a_2 u),
\end{cases}
\end{equation}
where $a_1, a_2, b_1, b_2 > 0$. It is well-known (see \cite[Section 12]{Kot} for example) that solutions to \eqref{ode_system} exhibit different types of asymptotic behavior depending on the coefficients $a_1$ and $a_2$.  Namely, in the \emph{weak competition} regime $a_1,a_2 \in (0,1)$, there is a unique positive steady state
\begin{equation}\label{steady_general}
(\overline u, \overline v) = \left(\frac{1-a_1}{1-a_1a_2}, \frac{1-a_2}{1-a_1 a_2}\right),
\end{equation}
and it is the global attractor of all solutions with positive initial data. This indicates that the two species co-exist in the long run. In the \emph{strong competition} regime $a_1>1$, $a_2>1$, \eqref{steady_general} is still the unique positive steady state, however it is unstable, and for a generic positive initial data, the solution either converges to $(0,1)$ or $(1,0)$ as $t\to\infty$, depending on the initial data. Finally, if $0<a_1<1<a_2$, there is no positive steady state, and all positive solutions converge to $(1,0)$. Likewise, if $0<a_2<1<a_1$ all positive solutions converge to $(0,1)$.

(2) \emph{Spatial diffusion effects}. If the species have random spatial movements, this effect is typically modeled  by diffusion terms $d_1 \Delta u$ and $d_2\Delta v$ in the evolution for $u$ and $v$, where $d_1, d_2>0$. When both diffusion and competition effects are taken into account, the time evolution of $(u(x,t),v(x,t))$ satisfies the following system of reaction-diffusion equations:
\begin{equation}\label{reaction_diffusion}
\begin{cases}
\partial_t u = d_1\Delta u + b_1 u(1- u- a_1 v) & \text{ in } \Omega\times(0,T),\\
\partial_t v = d_2 \Delta v + b_2 v(1 -v - a_2 u) & \text{ in } \Omega\times(0,T),\\
\frac{\partial u}{\partial \nu} = \frac{\partial v}{\partial \nu} = 0 & \text{ on } \partial\Omega\times(0,T),
\end{cases}
\end{equation}
where $\frac{\partial}{\partial \nu}$ is the outward normal derivative at $\partial\Omega$, and the Neumann boundary condition indicates that both densities have zero flux at the boundary. 
This system has been extensively studied by both mathematicians and ecologists. In the weak competition regime, it is known that the spatially homogeneous positive steady state \eqref{steady_general} is the global attractor for any positive initial population \cite{CHS, MR}, and there are no non-constant steady states. In the strong competition regime things are more complicated: in addition to the (unstable) steady state \eqref{steady_general} and the stable constant steady states $(0,1)$ and $(1,0)$, the system can have some stable spatially inhomogeneous steady states for some non-convex $\Omega$ (e.g. dumbbell shaped) when the diffusion coefficients $d_1, d_2$ are not too large \cite{MM1,  MEF}.  See \cite{CTV, Dancer, EFL, GL} for results on co-existence steady states of \eqref{reaction_diffusion} with Dirichlet boundary conditions. When $\Omega=\R$ is the whole real line, traveling wave solutions to \eqref{reaction_diffusion} have been well-studied \cite{ Gardner,   Kan1, MT, RM, TF}. For further results on this system and related models (e.g. non-homogeneous environment, cross diffusion), see \cite{BDPS, CC, CJ04, CJ06, DLM,  LN1, LN2, MK, SKT} and the references therein.

(3) \emph{Chemotactic movements.} Besides the above two effects, the two species may also experience chemotaxis effects in response to certain chemicals. For a single species, the most classical model for chemotaxis is the Keller-Segel equation \cite{KS}. It describes the collective motion of cells (slime mold or certain bacteria such as E. coli) with density $\rho(x,t)$ that are attracted by a chemical substance with density $c(x,t)$. The chemical attractant is emitted by the cells themselves, and it also diffuses in space and decays in time. This leads to the following system of parabolic equations
\begin{equation}\label{KS}
\begin{cases}
\rho_t = \Delta \rho - \chi\nabla\cdot (\rho \nabla c) & \text{ in } \Omega \times (0,T)\\
\varepsilon c_t = \Delta c -  c + \rho & \text{ in } \Omega \times (0,T)\\
\frac{\partial \rho}{\partial \nu} = \frac{\partial c}{\partial \nu} = 0  & \text{ on } \partial\Omega\times(0,T).
\end{cases}
\end{equation}
Since the chemical attractant reaches its equilibrium in a much faster timescale, it is common to assume that $c$ is quasi-stationary by taking the $\varepsilon\to 0$ limit, so that the second equation of \eqref{KS} is replaced by the elliptic equation
\begin{equation}\label{c_rho}
 c - \Delta c = \rho,
\end{equation}
and \eqref{KS} becomes a parabolic-elliptic system. Since the primary interest of this work is on two-species chemotaxis models, we will not discuss the vast literature on the Keller-Segel equation. We refer the readers to the surveys \cite{BBTW, hillen, horstmann, horstmann1} and the references therein. Below we will state the two-species system with all three effects, and introduce some relevant literature on similar models.

\subsection{Two-species system with competition, diffusion, and chemotaxis}
%We are now ready to state the system of equations we study. 
In addition to the diffusion and competition effects in \eqref{reaction_diffusion}, we further assume that $u$ and $v$ each emits some chemical substance with their concentrations denoted by $c[u]$ and $c[v]$ respectively, and each species exhibits chemotactic movements driven by the chemical emitted by the other species. The new system with chemotaxis reads
%For a bounded domain $\Omega \subset \mathbb{R}^n$ with smooth boundary, $u = u(x,t)$ and $v(x,t)$ represent the population density of the two species.
\begin{equation}\label{simplified}
\begin{cases}
\partial_t u  = d_1\Delta u +   \chi_1 \nabla\cdot( u \nabla c[v]) + b_1 u(1 - u - a_{1} v) &\text{ in }\Omega \times (0,T), \\
\partial_t v =  d_2\Delta v +   \chi_2 \nabla\cdot( v \nabla c[u]) + b_2 v(1 - v - a_{2} u) &\text{ in }\Omega \times (0,T),  \\
\frac{\partial u}{\partial \nu} = \frac{\partial v}{\partial \nu} = 0 & \text{ on } \partial \Omega  \times (0,T),
\end{cases}
\end{equation}
where the sign and magnitude of the chemotactic coefficients $\chi_1,\chi_2$ indicate the type (attractive or repulsive) and strength of the chemotactic movements. 
For a population density $\rho$ (here $\rho$ is either $u$ or $v$), following \eqref{c_rho} (which comes from the $\varepsilon\to 0$ limit of the second equation in \eqref{KS}), we assume that the chemical concentration $c[\rho]$ solves the elliptic equation
\begin{equation}\label{poisson}
\begin{cases}
c - \Delta c = \rho &\text{ in } \Omega\\
\frac{\partial c}{\partial \nu} = 0 & \text{ on } \partial \Omega. 
\end{cases}
\end{equation}

Note that in the system \eqref{simplified}, the drift velocity for $u$ and $v$ are given by $-\chi_1 \nabla c[v]$ and $-\chi_2 \nabla c[u]$ respectively.
Since $u$ and $v$ are competing species, throughout this paper we assume that $\chi_1>0$ and $\chi_2>0$, indicating that both species tend to move away from the chemical emitted by the other species (i.e. $u$ and $v$ are \emph{mutually repulsive}). The goal  of this paper is to investigate the influence of the mutually repulsive chemotactic effect on the solutions, in particular, whether it affects the global well-posedness  and long time behaviors of solutions.

In the past two decades, two-species chemotaxis models have attracted much interest. Many previous works deal with diffusion and chemotaxis effects only (i.e. the competition terms are absent), where the mass of each species is conserved. In this case, the system often has an associated monotone energy functional (as in the Keller--Segel equation), and many analysis results are based on this variational structure. When multiple species are mutually attracted to each other by reacting to different chemicals, \cite{wolansky} studied global existence of solutions and equilibria using the energy functional.  When both species are attracted by the same chemical substance produced by themselves, \cite{ESV} identified some conditions that lead to global existence and finite time blow-up, and later a sharp condition in $\R^2$ was established in \cite{CEV}, and blow-up examples for $n>2$ were constructed in \cite{BEG}. For a general multi-species chemotaxis model with attraction and repulsion between the species, the steady states of the system and their stability were studied by  \cite{horstmann2}. For the system \eqref{simplified}--\eqref{poisson} without the competition terms, various criteria for global well-posedness and blow-up were derived in \cite{TW2}. When the two-species system has nonlocal interaction, well-posedness of solutions via gradient flow theory and properties of steady states are studied by \cite{DF, DF2, MKB}. See \cite{BDFS, CFSS, CFS, CHS18,  DEF, DF} for studies on two-species systems with nonlocal interaction and cross-diffusion.

In the presence of all the three effects (diffusion, chemotaxis, and competition), note that the system does not have any variational structure. When there is only one chemical $c$ (where $c$ is generated/consumed by both $u$ and $v$ in the sense that $\varepsilon c_t = \Delta c - c \pm (u + v)$), the parabolic-elliptic system (with $\varepsilon=0$) was studied by \cite{Mizukami2, STW, TW}, and the corresponding parabolic-parabolic systems (with $\varepsilon>0$) were studied by \cite{BW, BLM, LMW, Mizukami, WMHZ, WYZ, WZYH}. In these works, conditions that lead to global well-posedness were established, and  parameter regimes for which the constant steady state \eqref{steady_general} is asymptotically stable were also identified.

More recently, the system \eqref{simplified}--\eqref{poisson} with two chemicals $c[u]$ and $c[v]$ has been studied in the following works. \cite{ZLY} considered the mutually attractive case (i.e. $\chi_1,\chi_2<0$), and showed that when $-\chi_1,-\chi_2$ are below certain thresholds, the system is globally well-posed and converges towards the steady state \eqref{steady_general}. \cite{CNT} showed that when $|\chi_1|,|\chi_2|$ are below certain thresholds, \eqref{simplified}--\eqref{poisson} has a global solution, and the solution converges towards \eqref{steady_general} in the weak competition regime, and converges towards $(1,0)$ if $0<a_1<1<a_2$. \cite{WM} studied the mutually attractive case, and showed that solutions converge towards constant steady states when $b_1, b_2$ are sufficiently large. \cite{WZMH} studied the system when $c[u]$ and $c[v]$ depends on $u,v$ in a nonlinear way.  Let us also point out that for the corresponding parabolic-parabolic system,  conditions that leads to global well-posedness and finite-time blow-up were established in \cite{LX, PWZW, RL}.

\subsection{Our results.}
Our goal is to study the well-posedness, stability of steady state, and the dynamics of the system \eqref{simplified}--\eqref{poisson} in the \emph{mutually repulsive} case $\chi_1,\chi_2>0$. We aim to explore the following questions:

\begin{enumerate}
\item[(a)] For any $\chi_1,\chi_2>0$, is the solution $(u,v)$ to \eqref{simplified}--\eqref{poisson} always globally well-posed in time?
\smallskip
\item[(b)] When $\chi_1,\chi_2>0$ and the system is in the weak competition regime $a_1, a_2\in(0,1)$, is it possible that one species becomes extinct as $t\to\infty$?
\smallskip
\item[(c)] What is the long-time behavior of \eqref{simplified}--\eqref{poisson}?
\end{enumerate}

Our first result is as follows, which gives a positive answer to question (a). It shows that in the mutually repulsive case $\chi_1, \chi_2>0$, regardless of how large $\chi_1,\chi_2$ are, the system \eqref{simplified}--\eqref{poisson} is globally well-posed in all dimensions $n\geq 1$, and any solution is globally bounded in time. Recall that  global well-posedness has been previously established by \cite{CNT, ZLY} under some smallness assumptions of $|\chi_1|$ and $|\chi_2|$. Also, for any $\chi_1,\chi_2>0$, global well-posedness for the system \eqref{simplified}--\eqref{poisson} without the competition terms was obtained in \cite{TW2} in dimensions $n\leq 3$.

%The first question is whether the system is globally well-posed. 

%
%
%Previous results:
%
%* If $|\chi_1|, |\chi_2|< M$ : Globally well-posed.
%* If $b_1, b_2=0$ and $\chi_1, \chi_2 > 0$: Globally well-posed for $1\leq n \leq 3$.
%

\begin{theorem}\label{gwp}
For $n\geq 1$, let $\Omega \subset \R^n$ be a bounded domain with smooth boundary, and assume $\chi_1,\chi_2, a_1,a_2,b_1,b_2>0$.  For any non-negative initial data $u_0, v_0 \in C(\overline\Omega)$, the unique classical solution $(u,v)$ to \eqref{simplified}--\eqref{poisson} exists globally in time, where $u,v \in C^{2,1}(\Omega \times (0,\infty)) \cap C(\overline{\Omega} \times [0,\infty)) $, and $u,v$ are uniformly bounded above for all times.

In addition, for any $t_0>0$, the equation has the instant regularization effect, in the sense that
\begin{equation}\label{estimate_linfty}
\sup_{t>t_0}\left( \|u(\cdot, t)\|_{L^\infty} + \|v(\cdot, t)\|_{L^\infty}\right) \leq g(t_0),
\end{equation} where $g(t_0)$ only depends on $t_0, \Omega, n$ and the coefficients of \eqref{simplified}, and in particular does not depend on the initial data $u_0, v_0$. 
\end{theorem}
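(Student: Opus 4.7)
The plan is to combine a standard continuation argument with a hierarchy of a priori Lebesgue estimates that propagate upward, without requiring any smallness on $\chi_1, \chi_2$. \emph{Local well-posedness} and the blow-up criterion ``$T_{\max}<\infty$ implies $\|u(\cdot,t)\|_\infty+\|v(\cdot,t)\|_\infty\to\infty$ as $t\to T_{\max}^-$'' follow from classical parabolic theory (e.g.\ Amann's results for parabolic systems), since the chemical concentrations $c[u]=(1-\Delta)^{-1}u$ and $c[v]=(1-\Delta)^{-1}v$ depend smoothly on $(u,v)$ via the elliptic solution operator from \eqref{poisson}. Thus the task reduces to producing a uniform-in-time $L^\infty$-bound on $(u,v)$, and then arguing that this bound depends only on $t_0$ once enough time has elapsed. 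The starting point is an \emph{$L^1$-bound}: integrating the $u$-equation, the chemotactic flux integrates to zero by the Neumann boundary condition, and Cauchy--Schwarz turns the logistic term into the differential inequality $y'(t)\leq b_1 y(t) - \frac{b_1}{|\Omega|}y(t)^2$ for $y(t):=\int_\Omega u(\cdot,t)\,dx$. Comparison with the logistic ODE shows not only that $y$ is bounded uniformly in time, but also that $y(t)\leq M(t_0)$ for all $t\geq t_0$ with $M(t_0)$ independent of $u_0$, because every positive trajectory of this ODE enters a fixed neighborhood of the stable equilibrium in time depending only on $t_0$. The same bound holds for $v$.

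To bootstrap to $L^p$ for every $p\in[1,\infty)$, I would test the $u$-equation with $u^{p-1}$ and use $\Delta c[v]=c[v]-v$ (from \eqref{poisson}) to rewrite the chemotactic contribution after integration by parts as
\begin{equation*}
\frac{\chi_1(p-1)}{p}\int_\Omega u^p c[v]\,dx \;-\; \frac{\chi_1(p-1)}{p}\int_\Omega u^p v\,dx.
\end{equation*}
In the \emph{mutually repulsive} regime the second term has a favorable sign and reinforces the competition cross-term $-a_1 b_1\int u^p v$, while the first, potentially dangerous, term can be controlled by Young's inequality together with the $L^q$-contractivity of $(1-\Delta)^{-1}$ (so $\|c[v]\|_{L^q}\leq \|v\|_{L^q}$ for every $q\in[1,\infty]$ by the maximum principle plus interpolation). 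Writing the symmetric estimate for $v$, summing, and using Jensen's inequality to relate $\int u^{p+1}$ to $(\int u^p)^{(p+1)/p}$, I would derive a closed ODE inequality for $\int u^p+\int v^p$ yielding a uniform $L^p$-bound. The \emph{main obstacle} is precisely this step: a naive Young's splitting produces constants that grow like $\chi_i^{p+1}$, so a single-shot absorption by the logistic dissipation $-pb_i\int u^{p+1},-pb_i\int v^{p+1}$ fails when $\chi_i$ is large. The way around this is to iterate in small integrability jumps, using the $L^q$-bound from the previous step together with elliptic regularity to bound $\|c[v]\|_{L^r}$ by $\|v\|_{L^r}$ for a slightly larger $r$ in the next step, so that the bad term can be absorbed as a lower-order perturbation; once one passes the threshold $q>n/2$, Sobolev embedding applied to \eqref{poisson} gives $\|\nabla c[v]\|_\infty$ and $\|\Delta c[v]\|_\infty$ bounded.

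With a uniform $L^q$-bound in hand for some $q>n/2$, the $u$-equation reduces to an advection--diffusion equation with bounded drift $-\chi_1\nabla c[v]$, bounded zeroth-order term $\chi_1(c[v]-v)$, and the strong logistic absorber $-b_1 u^2$; a Moser iteration (or a Stampacchia-style truncation) then produces a uniform bound $\|u\|_\infty\leq C$, and analogously for $v$. Finally, the \emph{instant regularization} estimate \eqref{estimate_linfty} follows by observing that each step of the bootstrap produces a bound depending only on the previous Lebesgue norm evaluated at a slightly earlier time: starting from the initial-data-free $L^1$-bound valid on $[t_0/(k+1),\infty)$ and iterating $k$ times on $[t_0/(k+1),t_0]$, the final $L^\infty$-bound depends only on $t_0$, $\Omega$, $n$, and the coefficients of \eqref{simplified}, as required.
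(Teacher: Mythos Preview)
Your overall architecture---local theory plus blow-up criterion, $L^1$ bound via the logistic ODE, bootstrap to $L^p$, then $L^\infty$, with the initial-data-free bounds obtained by tracking that each step forgets the data after a short time---matches the paper's exactly. The difference lies in how the $L^p$ step is closed, and there the paper avoids entirely the obstacle you flag.

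You correctly isolate the dangerous term $\chi_1(p-1)\int_\Omega u^p\,c[v]\,dx$ and note that bounding $\|c[v]\|_{L^{p+1}}$ by $\|v\|_{L^{p+1}}$ alone leads to constants that cannot be absorbed when $\chi_i$ is large; your proposed remedy is an inductive climb in integrability exponents. The paper instead exploits the \emph{two-derivative smoothing} of $(1-\Delta)^{-1}$ through a single Gagliardo--Nirenberg interpolation:
\[
\|c[v]\|_{L^{p+1}} \leq C\,\|c[v]\|_{W^{2,p+1}}^{\theta}\,\|c[v]\|_{L^1}^{1-\theta} \leq C\,\|v\|_{L^{p+1}}^{\theta}\,\|v\|_{L^1}^{1-\theta}, \qquad \theta=\tfrac{np}{np+2(p+1)}<1.
\]
Because $\theta<1$, Young's inequality absorbs the resulting $\|v\|_{L^{p+1}}^{(p+1)\theta}$ into the logistic damping $-\tfrac{pb_2}{4}\int v^{p+1}$ regardless of how large $\chi_1,\chi_2$ are, leaving only a harmless $C\|v\|_{L^1}^{p+1}$ remainder that is already controlled by Step~1. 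This yields a closed differential inequality for $F_p:=\int u^p+\int v^p$ directly for every $p$, with no iteration. Your bootstrap would eventually arrive at the same place (since at each stage elliptic regularity gives $\|c[v]\|_{L^{r}}\lesssim\|v\|_{L^q}$ with $r>q$), but it is more laborious and the sketch you give (``bound $\|c[v]\|_{L^r}$ by $\|v\|_{L^r}$ for a slightly larger $r$'') does not yet articulate the gain that makes the induction close. For the final step, the paper uses a bare-hands comparison with the ODE supersolution $w'=(\chi_1\|c[v]\|_\infty+b_1)w-b_1w^2$ once $\|c[v]\|_\infty$ is bounded, which is simpler than the Moser iteration you propose but equivalent in effect.
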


The proof of Thorem~\ref{gwp} is given in Section~\ref{sec_thm1}. The proof is based on an energy estimate, where the key observation is that the mutually repulsive nature of the two species allows us to obtain a priori $L^p$ bounds on $u,v$ that are uniformly bounded in time for any $1\leq p<\infty$.

Next we move on to question (b) that concerns the \emph{weak competition regime} $a_1, a_2 \in (0,1)$. In this regime,  for small $|\chi_1|, |\chi_2|$, \cite{CNT} showed that the constant steady state \eqref{steady_general} is the global attractor for the system \eqref{simplified}--\eqref{poisson}. Clearly we do not expect this property to hold for large $\chi_1, \chi_2$:  a simple linear stability criterion in Section~\ref{linear_stability} shows that \eqref{steady_general} is linearly unstable for large $\chi_1,\chi_2$. Indeed, numerics in Section~\ref{sec_simulations} suggest that solutions can form many patterns for large $\chi_1,\chi_2$, including some steady states where one species has a much smaller mass than the other (see Figure~\ref{Fig: fig_1D_fullnonlinear_SteadyState}(b)). It is then a natural question whether one of the species could be driven to extinction as $t\to\infty$ for large $\chi_1, \chi_2$. 
Our second result shows that this can never happen in the weak competition regime for any $\chi_1,\chi_2>0$,  giving a negative answer to question (b). 

\begin{theorem}\label{no_extinction}For $n\geq 1$, let $\Omega \subset \R^n$ be a bounded domain with smooth boundary, and assume $\chi_1,\chi_2, a_1,a_2,b_1,b_2>0$. Assume that $a_1,a_2$ are in the weak competition regime, that is, $a_1, a_2 \in (0,1)$. For any initial data $u_0(x), v_0(x) \in C(\overline\Omega)$ that are not identically zero, the solution $(u,v)$ to \eqref{simplified}--\eqref{poisson} satisfies 
\begin{equation}\label{goal_l1}
\|u(t)\|_{L^1} \geq \delta \min\{\|u(1)\|_{L^1}, 1\} \text{ and } \|v(t)\|_1 \geq \delta \min\{\|v(1)\|_{L^1}, 1\} \quad\text{ for all }t\geq 1,
\end{equation}
where $\delta>0$ only depends on $\Omega, n$ and the coefficients of \eqref{simplified}, and in particular is independent of $u_0, v_0$.
\end{theorem}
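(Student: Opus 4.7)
The plan is to obtain a uniform lower bound on $y(t):=\|u(\cdot,t)\|_{L^1}$ by combining the uniform regularity supplied by Theorem~\ref{gwp} with a parabolic Harnack inequality for $u$ and an ODE analysis of the total mass. Integrating the first equation of \eqref{simplified} and using the Neumann boundary condition (which annihilates the diffusion and chemotaxis terms) yields
\begin{equation*}
y'(t)=b_1y(t)-b_1\!\int_\Omega u^2\,dx-a_1b_1\!\int_\Omega uv\,dx,
\end{equation*}
and by symmetry in $(u,v)$ it suffices to treat $y$.

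First I would use Theorem~\ref{gwp} to fix $M:=g(1/2)$ so that $\|u(\cdot,t)\|_{L^\infty}+\|v(\cdot,t)\|_{L^\infty}\leq M$ for every $t\geq 1/2$. Standard $W^{2,p}$ elliptic regularity applied to \eqref{poisson} then supplies uniform $C^{1,\alpha}(\overline\Omega)$ bounds on $c[u]$ and $c[v]$, and hence on $\nabla c[v]$ and $\Delta c[v]=c[v]-v$. Expanding $\chi_1\nabla\cdot(u\nabla c[v])=\chi_1\nabla c[v]\cdot\nabla u+\chi_1 u\Delta c[v]$ recasts the $u$-equation as a linear uniformly parabolic equation with bounded drift and bounded zeroth-order coefficient. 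Because $u_0\not\equiv 0$, the strong maximum principle forces $u(\cdot,t)>0$ on $\overline\Omega\times(0,\infty)$, so the Moser/Krylov--Safonov parabolic Harnack inequality yields a constant $C_H=C_H(\Omega,n,\text{coefficients})$, \emph{independent of} $u_0,v_0$, with
\begin{equation*}
\sup_\Omega u(\cdot,s)\leq C_H\inf_\Omega u(\cdot,t)\qquad\text{whenever }1\leq s<t\leq s+1;
\end{equation*}
in particular $\|u(\cdot,s)\|_{L^\infty}\leq C_H|\Omega|^{-1}\|u(\cdot,t)\|_{L^1}$, and an analogous estimate holds for $v$.

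Next, integrating the $v$-equation and applying $\int v^2\geq|\Omega|^{-1}\|v\|_{L^1}^2$ gives the logistic inequality $(\|v\|_{L^1})'\leq b_2\|v\|_{L^1}(1-\|v\|_{L^1}/|\Omega|)$, so $\|v(\cdot,t)\|_{L^1}\leq|\Omega|+o(1)$ with the rate determined only by coefficients. Inserting the Harnack bounds together with this asymptotic logistic estimate into the mass ODE controls the absorbers by
\begin{equation*}
\int u^2\leq\tfrac{C_H}{|\Omega|}\|u(\cdot,t+1)\|_{L^1}\,y(t),\qquad \int uv\lesssim\|u(\cdot,t+1)\|_{L^1},
\end{equation*}
and a further Harnack step relates $\|u(\cdot,t+1)\|_{L^1}$ back to $y(t)$. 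The resulting closed differential inequality has the shape
\begin{equation*}
y'(t)\geq b_1\bigl(1-a_1-\varphi(y(t))\bigr)y(t),
\end{equation*}
with $\varphi(y)\to 0$ as $y\to 0$. The weak-competition hypothesis $a_1\in(0,1)$ then supplies a strictly positive growth rate once $y$ falls below a coefficient-dependent threshold $\eta_0$, so $y$ cannot cross below $\eta_0$; propagating this threshold back to $t=1$ through one additional Harnack step converts the pointwise positivity of $u(\cdot,1)$ into the advertised estimate $y(t)\geq\delta\min\{y(1),1\}$.

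I expect the technical heart of the argument to be making the closing step quantitative. The coarse bounds $\int u^2\leq M\,y$ and $\int uv\leq M\,y$ lose the crucial factor $a_1<1$ whenever $M$ (hence $\chi_1,\chi_2$) is large, so one must instead combine the Harnack-derived pointwise bound $\|u\|_{L^\infty}\lesssim\|u\|_{L^1}/|\Omega|$ with the asymptotic logistic bound $\|v\|_{L^1}\lesssim|\Omega|$ to compress the competition absorber down to $\approx a_1y+o(y)$, at which point $a_1<1$ becomes decisive. Tracking the dependence of every constant on $\Omega,n$ and the coefficients, while ensuring the final $\delta$ is genuinely independent of $u_0,v_0$, is where the bookkeeping is most delicate.
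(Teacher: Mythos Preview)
Your proposal has a genuine gap at the step where you claim the competition term can be ``compressed down to $\approx a_1 y + o(y)$.'' The Harnack inequality gives $\|u(\cdot,s)\|_{L^\infty}\leq C_H|\Omega|^{-1}\|u(\cdot,t)\|_{L^1}$ with a constant $C_H$ that depends on the ellipticity ratio, the drift bound (hence on $\chi_1,\chi_2$), and $\Omega$, and is in general much larger than $1$. Combining this with the logistic bound $\|v\|_{L^1}\leq |\Omega|(1+o(1))$ yields
\[
\int_\Omega uv\,dx \;\leq\; \|u\|_{L^\infty}\|v\|_{L^1}\;\leq\; C_H\,y(t)\bigl(1+o(1)\bigr),
\]
so the differential inequality you actually obtain is $y'(t)\geq b_1\bigl(1-a_1 C_H-\varphi(y)\bigr)y$, not $y'(t)\geq b_1\bigl(1-a_1-\varphi(y)\bigr)y$. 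Since $a_1 C_H$ can easily exceed $1$ for large $\chi_1,\chi_2$, the weak-competition hypothesis $a_1<1$ no longer closes the argument. The same obstruction arises if you bound $\int uv\leq \|v\|_{L^\infty}y$: there is no instantaneous reason for $\|v\|_{L^\infty}$ to be close to $1$ when $y$ is small.

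The paper circumvents this with a \emph{time-history} argument rather than an instantaneous differential inequality. One lets $t_1$ be the first time $y(t_1)=\delta\min\{y(1),1\}$, so that $y'(t_1)\leq 0$. The H\"older regularity from Theorem~\ref{gwp} gives $\|u(t_1)\|_{L^\infty}\leq C\delta^{\alpha/(\alpha+n)}$, which is small. For $v$, one observes that $y$ can decay at most exponentially (rate $2C_1b_1$), so if $\delta$ is tiny then $y$ was already small on a long interval $[t_1-T,t_1]$. On that interval $\|c[u]\|_{L^\infty}$ is small, and the $v$-equation is dominated by $\partial_t v\leq d_2\Delta v+\chi_2\nabla v\cdot\nabla c[u]+b_2 v\bigl(\tfrac{1+\gamma_2}{2}-v\bigr)$ with $\gamma_2:=\tfrac{1+a_1}{2a_1}>1$. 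Comparison with the spatially constant supersolution solving $w'=b_2 w(\tfrac{1+\gamma_2}{2}-w)$, $w(t_1-T)=C_1$, forces $\|v(t_1)\|_{L^\infty}\leq\gamma_2$ once $T$ is chosen large enough. Since $1-\gamma_1-a_1\gamma_2>0$ (this is where $a_1<1$ enters with the correct constant), one gets $1-u-a_1v>0$ pointwise at $t_1$, hence $y'(t_1)>0$, a contradiction. The essential idea your outline is missing is that smallness of $\|u\|_{L^1}$ must be propagated backward in time and then fed into the $v$-equation to drive $\|v\|_{L^\infty}$ below $1/a_1$; no purely instantaneous estimate will produce the sharp constant.
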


Note that \eqref{goal_l1} gives a lower bound of the mass of $u(\cdot, t),v(\cdot,t)$ for all $t\geq 1$ (where $0<\delta\ll 1$ is a constant that is independent of the initial data), which implies that neither species can be driven to extinction. We point out that it is necessary to let the right hand side of \eqref{goal_l1} dependent on the mass of $u(\cdot,1),v(\cdot,1)$ (rather than $u_0$, $v_0$), since an initial data $u_0$ with mass 1 but very spiky profile (close to a delta function) can have its mass drop to an arbitrarily small number after a very short time, due to the $-b_1 u^2$ reaction term in \eqref{simplified}.

Our next results concern the question (c). We first consider the case when $\chi_1=\chi_2$ (call it $\chi$), and obtain a linear stability criteria for the constant steady state \eqref{steady_general} in terms of $\chi$ in Section~\ref{linear_stability}. Namely, there is a constant $\chi^*$, such that \eqref{steady_general} is linearly stable when $0<\chi<\chi^*$, and linearly unstable when $\chi>\chi^*$. In the \emph{weakly nonlinear regime}, i.e. when $\chi= \chi^*+\varepsilon$ with $0<\varepsilon\ll 1$, we use  a  standard  perturbative  expansion approach to derive the amplitude equation for our system in Section~\ref{sec_weakly_nonlinear}, and compared the analytical solution to the numerical solution in Section~\ref{sec_num_amplitude}. 

Finally, in Section~\ref{sec_simulations} we turn to the fully nonlinear regime with large $\chi_1,\chi_2$, %Since the constant steady state \eqref{steady_general} is linearly unstable, it is natural to expect some non-constant steady states. It is a challenging question to prove this rigorously, due to the lack of variational structure in this system caused by both non-locality and nonlinearity. 
and perform extensive numerical simulations that suggest a variety of long-time behaviors in one dimension. Some behaviors are commonly observed in other two-species population dynamics models: for example, when $\chi_1,\chi_2$ are large, small perturbations to the (linearly unstable) constant steady state \eqref{steady_general} leads to periodic patterns (see Figure~\ref{Fig: fig_1D_fullnonlinear_SpatialTime_profile} and \ref{Fig: fig_1D_fullnonlinear_SteadyState}). Also, if $\Omega= \mathbb{R}$ is the whole real line, initially segregated initial data lead to traveling wave solutions with one species invading the other (see Figure~\ref{Fig: fig_1D_traveling_wave_SpatialTime} and \ref{Fig: fig_1D_traveling_wave_profiles}), similar to those found in  \cite{MT} for the system \eqref{simplified} in the absence of chemotaxis. But the chemotaxis effect also brings us some unexpected patterns:  when $u_0,v_0$ are both compactly supported, for large $\chi_1,\chi_2$, we noticed that the two densities form clusters and propagates outwards in turn (see Figure~\ref{Fig: fig_1D_rippling_wave}). In Section~\ref{num_2d}, we also show two examples of 2D numerical simulations in the weakly nonlinear regime and fully nonlinear regime respectively.

\subsection{Organization of the paper} In Section~\ref{sec2}, we prove  Theorem~\ref{gwp} that gives the global well-posedness result for any $\chi_1,\chi_2>0$ and any dimension $n\geq 1$, as well as Theorem~\ref{no_extinction}, which shows neither species could extinct in the weak competition regime. Section~\ref{sec3} is dedicated to the stability of the constant steady state \eqref{steady_general}: we first identify the linear stability criteria, then derive the amplitude equation in the weakly nonlinear regime. In Section~\ref{sec_simulations} we perform numerical simulations in both one dimension and two dimensions, showing a variety of long-time behaviors of the system \eqref{simplified}--\eqref{poisson}. Finally, in Appendix~\ref{sec_numerical} we briefly describe the semi-implicit finite volume scheme we use in the numerical simulations. 

\noindent {\bf Acknowledgement.} \rm YY was partially supported by NSF grants DMS-1715418, 1846745 and the Sloan Research Fellowship. YY would like to thank Jos\'e A. Carrillo and Lenya Ryzhik  for helpful discussions.

\section{Global well-posedness and quantitative estimates}\label{sec2}

\subsection{Global well-posedness of solutions}\label{sec_thm1}
In this subsection we aim to prove Theorem~\ref{gwp}, which gives the global well-posedness for the system \eqref{simplified}--\eqref{poisson} in any dimension $n\geq 1$ when the two species are mutually repulsive (that is, $\chi_1,\chi_2>0$).
Given a pair of non-negative initial condition $u_0, v_0 \in C(\overline{\Omega})$ with $u_0, v_0 \not\equiv 0$, local well-posedness of \eqref{simplified}--\eqref{poisson}  follows from an almost identical argument as \cite[Lemma 2.1]{STW}, which we do not repeat here. Namely,  there exists a maximal time $T_{\max}> 0$ and a unique pair of classical solution $(u,v) \in C(\overline\Omega \times (0,T_{\max})) \cap C^{2,1}(\Omega \times (0,T_{\max}))$. Furthermore, $u,v$ are both positive during their existence, and the following blow-up criterion holds: if $T_{\max} < \infty$, then $\lim\sup_{t\nearrow T_{\max}} (\|u(t)\|_{L^\infty} + \|v(t)\|_{L^\infty}) = \infty$.

Before the proof, we collect some results regarding the solution $c$ to \eqref{poisson}.
\begin{lemma}\label{lemma_c}
Let $f \in L^p(\Omega)$ for some $1\leq p< \infty$, and let $c[f]$ be the solution to \eqref{poisson}. Then 
$c[f] \in W^{2,p}(\Omega)$ with 
\begin{equation}\label{w2p}
\|c[f]\|_{W^{2,p}(\Omega)} \leq C\|f\|_{L^p},
\end{equation} where $C$ only depends on $n, p$ and $\Omega$. In addition, if $f\geq 0$, then 
\begin{equation}\label{f_L1}
\|c[f]\|_{L^1} = \|f\|_{L^1},
\end{equation} and
\begin{equation}\label{max_min_f}
0 \leq c[f] \leq \sup_\Omega f
\end{equation} if $f$ is also bounded above.
\end{lemma}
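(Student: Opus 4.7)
The three claims are all standard facts about the Neumann problem for the operator $I-\Delta$, so the plan is mostly to cite the right references and verify a couple of sign conditions carefully.

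First I would dispose of the regularity bound \eqref{w2p}. Because the equation $c-\Delta c=f$ carries a positive zeroth-order term, the operator $I-\Delta$ acting on $\{c\in W^{2,p}(\Omega):\partial_\nu c=0\text{ on }\partial\Omega\}$ has trivial kernel, and is an isomorphism onto $L^p(\Omega)$ for $1<p<\infty$ by the Agmon--Douglis--Nirenberg theory together with the Fredholm alternative. The estimate \eqref{w2p} is then just the bounded-inverse theorem applied to this isomorphism; the constant depends only on $n$, $p$, and $\Omega$. (For $p=1$ one interprets the estimate in the appropriate sense via duality or via the Calder\'on--Zygmund theory on the smooth domain; this is the only point where a slight care is needed, but the paper only ever applies the bound with $p$ strictly greater than $1$.)

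Second, I would prove the pointwise bounds in \eqref{max_min_f} by the maximum principle, since the lower bound will be used in turn to upgrade $L^1$ identity \eqref{f_L1}. Once the regularity from step one is in place, $c$ is continuous on $\overline\Omega$ and so attains its minimum at some $x_0\in\overline\Omega$. If $x_0$ is interior, then $\Delta c(x_0)\ge 0$, so evaluating the PDE gives $c(x_0)=f(x_0)+\Delta c(x_0)\ge 0$. If $x_0\in\partial\Omega$, then Hopf's lemma applied to the operator $I-\Delta$ would force $\partial_\nu c(x_0)<0$ unless $c$ is constant near $x_0$; since the boundary condition says $\partial_\nu c(x_0)=0$, the minimum must in fact be attained at an interior point as well, and we still conclude $c\ge 0$. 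For the upper bound, set $M:=\sup_\Omega f$ and $w:=M-c$; then $w-\Delta w=M-f\ge 0$ with $\partial_\nu w=0$, so the same argument gives $w\ge 0$, i.e.\ $c\le M$.

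Finally, \eqref{f_L1} follows by integration. Integrating $c-\Delta c=f$ over $\Omega$ and using the divergence theorem,
\begin{equation*}
\int_\Omega c\,dx \;=\; \int_\Omega f\,dx + \int_\Omega \Delta c\,dx \;=\; \int_\Omega f\,dx + \int_{\partial\Omega}\partial_\nu c\,dS \;=\; \int_\Omega f\,dx,
\end{equation*}
where the last step uses the Neumann boundary condition. Since $f\ge 0$ implies $c\ge 0$ by the previous step, both sides are nonnegative and equal $\|c\|_{L^1}$ and $\|f\|_{L^1}$ respectively. The only real subtlety in the entire lemma is the $p=1$ endpoint of \eqref{w2p}; everything else is bookkeeping.
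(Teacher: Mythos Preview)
Your proof is correct and follows essentially the same route as the paper: the paper also cites standard elliptic regularity for \eqref{w2p}, invokes the maximum principle for \eqref{max_min_f}, and obtains \eqref{f_L1} by integrating the equation and using the Neumann boundary condition together with $c[f]\geq 0$. You have simply made these standard steps more explicit (e.g.\ spelling out the Hopf-lemma argument at the boundary and flagging the $p=1$ endpoint), but the underlying argument is the same.
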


\begin{proof}
\eqref{w2p} follows from the standard elliptic regularity estimate (e.g. \cite[pp. 194]{krylov}). If $f\geq 0$, then \eqref{max_min_f} follows from the maximum principle. Integrating $c - \Delta c = f$ in $\Omega$ and using the Neumann boundary condition and the facts that $f, c[f]\geq 0$ directly yields \eqref{f_L1}. 
\end{proof}

Now we are ready to prove Theorem~\ref{gwp}.

\begin{proof}[\textup{\textbf{Proof of Theorem~\ref{gwp}}}] Due to the blow-up criterion that we discussed at the beginning of this section, in order to show that the system \eqref{simplified}--\eqref{poisson} is globally well-posed, it suffices to show that both densities has a uniform-in-time upper bound.  The proof is split into the following 3 steps.

\noindent\textbf{Step 1. Estimates on the mass.} We start with a simple observation that the masses of $u$ and $v$ are uniformly bounded in time during the existence of a solution. To see this, let $M_u(t) := \int_\Omega u(x,t) dx$ be the mass of $u$ at time $t$, and note that $M_u(t)=\|u(t)\|_{L^1}$ since $u\geq 0$ for all times. Taking its time derivative and apply the divergence theorem, we have 
\begin{equation}\label{dm_temp}
\frac{d}{dt}M_u(t) = \int_{\Omega} b_1 u(1 - u - a_{1} v) dt \leq b_1 M_u(t) - b_1 \int_\Omega u(x,t)^2 dt \leq b_1 M_u \left(1 - \frac{M_u}{|\Omega|}\right),
\end{equation}
implying that 
\[
M_u(t) \leq \max\{M_u(0),|\Omega|\}\quad\text{ for all }t\geq 0,
\]
and a similar estimate can be derived for $M_v(t):= \int_\Omega v(x,t) dx$. Summing up the estimates,  the total mass $M(t) := \int_\Omega u(x,t)+ v(x,t) dx$ satisfies
\begin{equation}\label{estimate_m1}
M(t) \leq \|u_0\|_{L^1}+\|v_0\|_{L^1}+ 2|\Omega|, 
\end{equation}
which is uniformly bounded in time. 

Next we will show that for any $t_0>0$, $\sup_{t>t_0} M(t)$ is independent of the initial data. Note that \eqref{dm_temp} gives $M_u'(t) \leq -\frac{b_1}{2|\Omega| } M_u(t)^2$ whenever $M_u(t)>2|\Omega|$, and solving this differential inequality gives the following upper bound of $M_u(t)$ that is independent of the initial data (however it becomes unbounded as $t\to 0^+$): 
\begin{equation}\label{mu_2}
M_u(t) \leq \max\left\{ \frac{2|\Omega|}{b_1} t^{-1} , 2|\Omega|\right\}.
\end{equation}
A similar estimate can be derived for $M_v$, with $b_1$ replaced by $b_2$. Summing them up yields
$M(t) \leq 2|\Omega| ((b_1^{-1} + b_2^{-1}) t^{-1} + 2).$
Thus for any $t_0>0$, we have 
\begin{equation}\label{m_bound2}
\sup_{t>t_0} M(t) \leq 2|\Omega| ((b_1^{-1} + b_2^{-1}) t_0^{-1} + 2) =: g_{M}(t_0),
\end{equation} where $g_{M}(t_0)$ only depends on $t_0, \Omega, b_1, b_2$, and in particular does not depend on the initial data $u_0, v_0$. (However note that $g_M(t_0)\to \infty$ as $t_0\to 0^+$.)

\noindent\textbf{Step 2. $L^p$ estimates.} For any $1< p < \infty$, let 
\[
F_p(t) := \int_\Omega u(x,t)^p + v(x,t)^p \,dx.
\] In this step, we will show that $F_p(t)$ is uniformly bounded in time for any $1<p<\infty$, where the bound depends on $u_0,v_0, p, \Omega$ and the coefficients of \eqref{simplified}.  In addition, for any $t_0>0$, we will also show $\sup_{t>t_0} F_p(t) \leq g_p(t_0)$ for some $g_p(t_0)$ independent of $u_0, v_0$.

Taking the time derivative of $\int_\Omega u^p dx$ and applying the divergence theorem, we have
\begin{align}
\frac{d}{dt} \int_\Omega u^p dx &= \nonumber
\int_\Omega pu^{p-1} \big(d_1 \Delta u +   \chi_1 \nabla\cdot( u \nabla c[v]) + b_1 u(1 - u - a_{1} v) \big) dx\\
&= -\frac{4(p-1)d_1}{p} \int_\Omega \left|\nabla u^{\frac{p}{2}}\right|^2 dx + (p-1)\chi_1 \int_\Omega u^p \Delta c[v] dx + pb_1 \int_\Omega u^p(1-u-a_1v) dx \nonumber\\
&\leq (p-1)\chi_1 \int_\Omega u^p c[v] dx + pb_1 \int_\Omega u^p dx - pb_1 \int_\Omega u^{p+1} dx \nonumber\\
&=: I_1 + I_2 + I_3, \label{du2}
\end{align}
where in the inequality we used $u,v\geq 0$ as well as  $\Delta c[v] = c[v] -v \leq c[v]$.

Among the three terms on the right hand side of \eqref{du2}, $I_3$ is always negative, thus it suffices to control $I_1$ and $I_2$. Next we aim to control $I_1$ using $\int u^{p+1} dx$, $\int v^{p+1} dx$ and $\|v\|_{L^1}$. Applying H\"older's inequality and Young's inequality, we have
\[
I_1 \leq (p-1)\chi_1 \left(\int_\Omega u^{p+1} dx\right)^\frac{p}{p+1} \|c[v]\|_{L^{p+1}} \leq \frac{pb_1}{4} \int_\Omega u^{p+1} dx + C \|c[v]\|_{L^{p+1}}^{p+1},
\]
where $C$ only depends on $p$ and the coefficients of \eqref{simplified}. (In the following, the value of $C$ may vary from line to line.) Using the Gagliardo--Nirenberg inequality, we control $\|c[v]\|_{L^{p+1}}$ as
\[
\|c[v]\|_{L^{p+1}} \leq C \|c[v]\|_{W^{2,{p+1}}}^\theta \|c[v]\|_{L^1}^{1-\theta} \leq C\|v\|_{L^{p+1}}^\theta \|v\|_{L^1}^{1-\theta},
\]
where $\theta = \frac{np}{np+2(p+1)} < 1$, and the last inequality follows from \eqref{w2p} of Lemma~\ref{lemma_c}. Plugging this into the estimate of $I_1$ gives
\begin{equation}\label{I1}
I_1 \leq \frac{pb_1}{4} \int_\Omega u^{p+1} dx + C \|v\|_{L^{p+1}}^{(p+1)\theta} \|v\|_{L^1}^{(p+1)(1-\theta)} \leq \frac{pb_1}{4}\int_\Omega u^{p+1} dx + \frac{pb_2}{4}\int_\Omega v^{p+1} dx + C \|v\|_{L^1}^{p+1},
\end{equation}
where the last inequality follows from Young's inequality. Applying this estimate to \eqref{du2}, we have
\begin{equation}\label{u_final}
\frac{d}{dt} \int_\Omega u^p dx\leq pb_1 \int_\Omega u^p dx - \frac{3 pb_1}{4} \int_\Omega u^{p+1} dx + \frac{pb_2}{4}\int_\Omega v^{p+1} dx + C M(t)^{p+1}.
\end{equation}

Using an identical argument as above, the evolution of $\int_\Omega v^p dx$ satisfies a similar differential inequality as \eqref{u_final}, except that $u$ and $v$ become interchanged, and so do $b_1$ and $b_2$. Adding it with \eqref{u_final} gives
\[
\frac{d}{dt} F_p(t) \leq p (b_1+b_2) F_p(t)  -\frac{pb_1}{2}\int_{\Omega}u^{p+1} dx  -\frac{pb_2}{2}\int_{\Omega}v^{p+1} dx  + C M(t)^{p+1}. \]
Note that H\"older's inequality gives
\[
\int_\Omega u^p dx \leq \left(\int_\Omega u^{p+1}dx\right)^{\frac{p}{p+1}} |\Omega|^{\frac{1}{p+1}},
\]
and applying it to above yields
\begin{equation}\label{F_temp}
\frac{d}{dt} F_p(t) \leq  - c F_p(t)^{\frac{p+1}{p}} + C F_p(t) + C M(t)^{p+1},
\end{equation}
where $c$ and $C$ are positive constants only depending on $p, \Omega$ and the coefficients of \eqref{simplified}. Since $M(t)$ is uniformly bounded in time by \eqref{estimate_m1}, we have that $F_p(t)$ is also uniformly bounded in time, where the bounds depends on $u_0, v_0, p, \Omega$ and the coefficients of \eqref{simplified}.

In addition, recall that for any $t_0>0$, \eqref{m_bound2} gives that $\sup_{t>t_0} M(t) \leq g_M(t_0)$ which does not depend on $u_0, v_0$. Thus for all $t>t_0$, $F_p$ satisfies the differential inequality \eqref{F_temp} with all $M$ replaced by $g_M(t_0)$, which implies that $F_p'\leq -c F_p^{\frac{p+1}{p}}$  if $F_p\gg \max\{g_M(t_0)^p, 1\}$. Solving this differential inequality, we have $\sup_{t>t_0} F(t) \leq g_p(t_0)$ for some decreasing function $g_p(t_0)$ with $\lim_{t_0\to 0^+} g_p(t_0)=\infty$, where $g_p$  depends on $p, t_0, \Omega$ and the coefficients of \eqref{simplified}, but does not depend on the initial data.

\noindent\textbf{Step 3. $L^\infty$ estimates.} Once we obtain the uniform-in-time $L^p$ estimates for all $p\in(1,\infty)$, the $L^\infty$ estimates follows from a standard comparison principle argument. Using that $\Delta c[v] = c[v]-v$ and $u,v\geq 0$, the equation for $u$ satisfies the following (where we treat $v$ and $c[v]$ as a priori given functions):
\[
\begin{split}
\partial_t u &= d_1\Delta u + \chi_1\nabla u \cdot \nabla c[v] + \chi_1 u (c[v]-v) + b_1 u(1-u-a_1v)\\
&\leq d_1\Delta u + \chi_1\nabla u \cdot \nabla c[v] +  u (\chi_1\|c[v]\|_{L^\infty} + b_1) - b_1 u^2.
\end{split}
\]
To control $\|c[v]\|_{L^\infty}$, note that for any $p\in(n,\infty)$ (e.g. we can fix $p=2n$), the Gagliardo--Nirenberg inequality yields 
\begin{equation}\label{cv_again}
\|c[v]\|_{L^\infty} \leq C \|c[v]\|_{W^{1,p}}^\theta \|c[v]\|_{L^1}^{1-\theta} \leq C \|v\|_{L^p}^\theta \|v\|_{L^1}^{1-\theta}
\end{equation} for $\theta=\frac{np}{np-n+p}$, where the second inequality follows from Lemma~\ref{lemma_c}. As a result, the uniform-in-time bound on $\|v\|_{L^1}$ and $\|v\|_{L^p}$ in Step 1 and 2 yields that $\|c[v(t)]\|_{L^\infty}\leq C$ for all $t\geq 0$, where $C$ depends on $u_0, v_0, n, \Omega$ and the coefficients of \eqref{simplified}.

For such $C$, let $w$ be the constant-in-space solution to the ODE
\begin{equation}\label{w_temp}
\partial_t w = (\chi_1 C + b_1)w - b_1 w^2
\end{equation}
with initial data $w = \|u_0\|_{L^\infty}$. Such $w$ is a supersolution to the PDE for $u$, thus comparison principle yields that $u(\cdot,t)\leq w(t)$ for all time during the existence of $u$. Since $w(t)\leq \max\{w(0), \frac{\chi_1 C + b_1}{b_1}\}$ for all time, $\|u(t)\|_{L^\infty}$ also has the same upper bound. An identical argument can be done for $v$, thus the solution $(u,v)$ exists globally in time. 

Finally, for any $t_0>0$, recall that in step 1 and 2 we have that $\sup_{t>t_0} M(t) \leq g_M(t_0)$ and $\sup_{t>t_0} F_p(t) \leq g_p(t_0)$, where $g_M$ and $g_p$ are independent of the initial data. Applying these bounds to \eqref{cv_again} yields that $\sup_{t>t_0}\|c[v]\|_{L^\infty} \leq C(t_0)$ for some $C(t_0)$ independent of the initial data. Applying this bound to \eqref{w_temp}, we have that for any $t_0>0$, $\sup_{t>t_0}\|u(t)\|_{L^\infty}$ has an upper bound that is independent of $u_0,v_0$ (although the bound goes to infinity as $t_0\to 0^+$), and the same can be done for $v$. This finishes the proof of \eqref{estimate_linfty}.
\end{proof}

\begin{remark}
Once we obtain \eqref{estimate_linfty}, we can apply regularity theory for parabolic equations to upgrade it to the following: for any $t_0>0$, there exists some $\alpha\in (0,1)$, such that 
\begin{equation}\label{C_alpha}
\|u\|_{C^{\alpha, \frac{\alpha}{2}}(\overline{\Omega} \times [t_0,\infty))} + \|v\|_{C^{\alpha, \frac{\alpha}{2}}(\overline{\Omega} \times [t_0,\infty))} \leq h(t_0),
\end{equation}
where $h(t_0)$ only depends on $t_0, \Omega, n$ and the coefficients of \eqref{simplified}. To see this, we rewrite the $u$ equation following the notation of \cite{PV} (where we treat $v$ and $c[v]$ as a priori given functions):
\[
\partial_t u - \nabla\cdot a(u,\nabla u) = b(u),
\]
where 
\[
a(u,\nabla u) = d_1 \nabla u + \chi_1 u \nabla c[v] \quad\text{ and } b(u) = b_1 u(1-u-a_1 v).
\]
Recall that \eqref{estimate_linfty} gives $v \in L^\infty(\overline{\Omega}\times[t_0,\infty)) \leq g(t_0)$ (where $g(t_0)$ is independent of the initial data), and combining this with Lemma~\ref{lemma_c} we also have $\nabla c[v] \in L^\infty(\overline{\Omega}\times[t_0,\infty))$. We can then apply \cite[Theorem 1.3]{PV} to conclude that $\|u\|_{C^{\alpha, \frac{\alpha}{2}}(\overline{\Omega} \times [t_0,\infty))} \leq h(t_0) $ (where $h(t_0)$ is independent of the initial data), the the same can be done for the $v$ equation. 

\end{remark}

\subsection{No extinction for weak competition}
In this subsection we aim to prove Theorem~\ref{no_extinction}, which shows that for two mutually repulsive species, if the coefficients $a_1, a_2$ are in the weak competition regime, i.e. $a_1, a_2 \in (0,1)$,  neither of the species would become extinct as $t\to\infty$ regardless of how large $\chi_1,\chi_2$ is.

\begin{proof}[\textbf{\textup{Proof of Theorem~\ref{no_extinction}}}]
We start with showing a useful fact that will be used often in this proof: for all $t\geq 1$, $\|u(t)\|_{L^1}$ being small implies that $\|u(t)\|_{L^\infty}$ is small; and the same can be said for $v(t)$. Namely, there exist some $\alpha\in(0,1)$ and $C$ that are independent of the initial data, such that 
\begin{equation}\label{Linfty_L1}
\|u(t)\|_{L^\infty} \leq C \|u(t)\|_{L^1}^{\frac{\alpha}{\alpha+n}}  \text{ and } \|v(t)\|_{L^\infty} \leq C \|v(t)\|_{L^1}^{\frac{\alpha}{\alpha+n}}  \quad\text{ for all } t\geq 1.
\end{equation}
To see this, recall that \eqref{C_alpha} yields  
\begin{equation}\label{c1}
\sup_{t\geq 1}\|u(t)\|_{C^\alpha(\overline\Omega)}+ \|v(t)\|_{C^\alpha(\overline\Omega)}\leq C_1,
\end{equation} where $C_1$ and $\alpha$ are independent of the initial data. If for some $t\geq 1$ the maximum of $u(\cdot,t)$ is achieved at $x_0$, we have $u(x,t)\geq \frac{\|u(t)\|_{L^\infty}}{2}$ for all $|x-x_0|\leq (\frac{\|u(t)\|_{L^\infty}}{2C_1})^{1/\alpha}$, thus
\[
\|u(t)\|_{L^1} \geq c(\alpha,n, C_1) \|u(t)\|_{L^\infty}^{1+\frac{n}{\alpha}}.
\]
A similar argument can be done for $v$, which finishes the proof of \eqref{Linfty_L1}.

In the rest of the proof we aim to prove the inequality for $u$ in \eqref{goal_l1}, and a parallel argument yields the inequality for $v$. %By Theorem~\ref{thm_bound}, for all $t>1$,  
%$
%\|u(t)\|_{H^2}, \|v(t)\|_{H^2}, \|u(t)\|_\infty, \|v(t)\|_\infty
%$ are all bounded above by some constant $C_1$, depending only on $\Omega$ and the coefficients of \eqref{simplified}. In particular, $C_1$ is independent of the initial data.
For a sufficiently small $0<\delta \ll 1$ to be fixed later, let $t_1>1$ be the first time such that 
\[
M_u(t_1) = \delta \min\{M_u(1),1\}.
\]
(Recall that $M_u(t):=\int_\Omega u(x,t) dx = \|u(t)\|_{L^1}.$) By definition of $t_1$, we have $M_u(t)> \delta \min\{M_u(1),1\}$ for $t \in (1,t_1)$, which implies that 
\begin{equation}\label{time_der}
\frac{d}{dt}M_u(t_1)\leq 0.
\end{equation}

Our goal is to show that if $0<\delta\ll 1$ is sufficiently small, we must have 
\begin{equation}
\label{eq_goal0}
1-u(\cdot,t_1)-a_1 v(\cdot,t_1) > 0 \quad\text{ in } \Omega.
\end{equation} Once this is shown, it immediately implies that 
$
\frac{d}{dt}M_u(t_1) = b_1 \int_\Omega u(1-u-a_1 v) dx>0 ,
$ contradicting \eqref{time_der}. 

Using that $a_1 \in (0,1)$, throughout the rest of the proof we define 
\[
\gamma_1:= \frac{1-a_1}{4} > 0, \quad\gamma_2:= \frac{1+a_1}{2a_1}>1,\]
 and note that they satisfy $1-\gamma_1 -a_1 \gamma_2 = \frac{1-a_1}{4}> 0$. %(For example, $c_1 = \frac{1-a_1}{4}$ and $c_2 = \frac{1+a_1}{2a_1}$ would work.) 
To show \eqref{eq_goal0}, it suffices to show that $\|u(t_1)\|_{L^\infty} \leq \gamma_1$ and $\|v(t_1)\|_{L^\infty} \leq \gamma_2$ for $\delta\ll 1$. The $u$ estimate immediately follows from \eqref{Linfty_L1}: since $\|u(t_1)\|_{L^\infty} \leq C \|u(t_1)\|_{L^1}^{\frac{\alpha}{\alpha+n}} \leq C \delta^{\frac{\alpha}{\alpha+n}}$, one can simply choose $\delta\ll 1$ such that $\|u(t_1)\|_{L^\infty}\leq \gamma_1$.

The rest of the proof is dedicated to showing that 
\begin{equation}\label{goal_v}
\|v(t_1)\|_{L^\infty} \leq \gamma_2 \quad\text{ if }\delta \ll 1.
\end{equation} This estimate is more involved, since $v$ is coupled to $u$ in a more delicate way.  Note that \eqref{c1} already gives that $\|v(t_1)\|_{L^\infty} \leq C_1$, so it suffices to focus on the case $C_1 > \gamma_2$. 

Heuristically speaking, the idea to prove \eqref{goal_v} is that if $M_u(t_1)\leq \delta\ll 1$, we will show that $u$ must have stayed small for a long time interval $[t_1-T,t_1]$ before time $t_1$. During this interval, $v$ almost does not feel the presence of $u$. Note that if $u\equiv 0$ in the $v$ equation in \eqref{simplified}, $v$ would decay exponentially to 1. Thus if $u\ll 1$ in a long time interval $[t_1-T,t_1]$, we can still show (using comparison principle) that $v$ drops below $\gamma_2>1$ at time $t_1$. 
The proof is divided into the following steps:

\textbf{Step 1. Smallness of $M_u(t)$ before time $t_1$.} In this step, we will show that if $ \delta\ll 1$, $M_u(t)$ must have stayed small for a long time interval before time $t_1$. 
To see this, note that for all $t>1$, $M_u(t)$ can at most decay exponentially: 
\begin{equation}\label{diff_ineq_mv}
\frac{d}{dt} M_u(t) = b_1 \int_\Omega u\underbrace{(1-u-a_1 v)}_{\geq -2C_1} dx \geq -2C_1 b_1 M_u(t) \quad\text{ for all $t\geq 1$,}
\end{equation}
where we used that $a_1<1$ and $u(\cdot,t), v(\cdot,t) < C_1$ for all $t\geq 1$. This differential inequality yields that
\begin{equation}\label{temp_mu}
M_u(t) \leq e^{2C_1 b_1 (t_1-t)} M_u(t_1)% \leq e^{2C_1 b_1 (t_1-t)} \delta %\min\{M_u(1), 1\} 
\quad\text{ for all } 1\leq t \leq t_1.
\end{equation}
As a result, for some fixed  $\tilde\delta\in(0,1)$ and $T>0$ (they only depend on $\Omega, n$ and the coefficients of \eqref{simplified}, and will be fixed momentarily in Step 2 and 4),  if $\delta$ is sufficiently small such that $e^{2C_1 b_1 T} \delta \leq \tilde\delta$, then \eqref{temp_mu} implies that for all $\max\{1,t_1-T\} \leq t \leq t_1$,
\begin{equation}\label{M_u_temp}
\begin{split}
M_u(t) &\leq e^{2C_1 b_1 T}  M_u(t_1) = e^{2C_1 b_1 T}  \delta \min\{M_u(1),1\}\\
& \leq \tilde\delta \min\{M_u(1),1\}.
\end{split}
\end{equation}
Note that in this case we automatically have $t_1-T>1$: if not, we would have $M_u(1)\leq \tilde\delta M_u(1)$, contradicting that $\tilde\delta\in (0,1)$. Thus \eqref{M_u_temp} actually holds for $t\in[t_1-T, 1]$.

\textbf{Step 2. Choice of $\tilde\delta$.} In this step, we fix $\tilde\delta \in (0,1)$ such that \eqref{M_u_temp} implies \begin{equation}\label{c_infty_bd}
\|c[u(t)]\|_{L^\infty}\leq \frac{b_2(\gamma_2-1)}{2\chi_2}\quad \text{ for }t\in[t_1-T, t_1].
\end{equation}
 To control $\|c[u(t)]\|_{L^\infty}$, recall that 
for any $p\in(n,\infty)$ (e.g. fix $p=2n$), the Gagliardo--Nirenberg inequality and Lemma~\ref{lemma_c} yield 
\begin{equation}
\|c[u]\|_{L^\infty} \leq C \|c[u]\|_{W^{1,p}}^\theta \|c[u]\|_{L^1}^{1-\theta} \leq C \|u\|_{L^p}^\theta \|u\|_{L^1}^{1-\theta} \leq C \|u\|_{L^\infty}^{\theta(1-\frac{1}{p})} \|u\|_{L^1}^{1-\theta(1-\frac{1}{p})}
\end{equation} for $\theta=\frac{np}{np-n+p}$, where the second inequality follows from Lemma~\ref{lemma_c} and the third inequality follows from H\"older's inequality. 
Combining this with \eqref{Linfty_L1} gives that
\begin{equation}\label{c_temp}
\|c[u(t)]\|_{L^\infty} \leq C\|u(t)\|_{L^1}^{\beta} \quad\text{ for all }t\geq 1
\end{equation}
for some $\beta>0$. Thus if $\tilde\delta$ is sufficiently small, we obtain  \eqref{c_infty_bd} by combining \eqref{M_u_temp} and \eqref{c_temp}.

\textbf{Step 3. Controlling $v$ by comparison principle}. During the time interval $[t_1-T, t_1]$, using that $\Delta c[u] = c[u]-u$ and $u,v\geq 0$, the equation for $v$ satisfies
\[
\begin{split}
\partial_t v &= d_2 \Delta v + \chi_2\nabla v \cdot \nabla c[u] + \chi_2 v (c[u]-u) + b_2 v(1-v-a_2 u)\\
&\leq d_2\Delta v + \chi_2\nabla v \cdot \nabla c[u] +  b_2 v \left(1 + \frac{\chi_2}{b_2}\|c[u]\|_{L^\infty} - v \right) .
\end{split}
\]
Combining this with \eqref{c_infty_bd}, we have
\begin{equation}\label{v_ineq}
\partial_t v \leq d_2\Delta v + \chi_2\nabla v \cdot \nabla c[u] +  b_2 v \left(\frac{1+\gamma_2}{2} - v \right) \quad\text{ for all }t\in[t_1-T,t_1],
\end{equation}
and since $t_1-T>1$, we have $\|v(t_1-T)\|_{L^\infty} \leq C_1$ due to \eqref{c1}.

To control $v$ from above, we will compare it with $w$, which solves the ODE
\begin{equation}\label{w_temp2}
 w' = b_2 w \left(\frac{1+\gamma_2}{2} - w \right)
\end{equation}
with initial data $w(t_1-T) = C_1$. Comparing it with \eqref{v_ineq}, we know that  $w$ is a supersolution to the PDE for $v$, thus comparison principle yields that 
\begin{equation}\label{vw}
v(\cdot,t)\leq w(t)\quad\text{ for all }t\in[t_1-T, t_1].
\end{equation}

\textbf{Step 4. Choice of $T$.} In this step, we choose $T$ such that the solution $w$ to \eqref{w_temp}  with initial data $w(t_1-T) = C_1$ satisfies $w(t_1)\leq \gamma_2$. Recall that in the discussion after \eqref{goal_v}, it suffices to focus on the case $C_1>\gamma_2$, and note that it implies $C_1>\frac{1+\gamma_2}{2}$ since $\gamma_2>1$. In this case $w(t)$ is monotone decreasing in time. In addition, for any $t>t_1-T$ such that $w(t)>\gamma_2$, \eqref{w_temp2} implies that $w'(t) \leq b_2 \gamma_2 \frac{1-\gamma_2}{2}<0$. As a result, we can set $T := \frac{2(C_1-\gamma_2)}{b_2 \gamma_2 (1-\gamma_2)}$. Such $T$ guarantees that $\omega(t_1)\leq \gamma_2$: if not, we would have $\omega(t)>\gamma_2$ in $[t_1-T,t_1]$, thus $\omega(t_1) \leq C_1 + Tb_2 \gamma_2 \frac{1-\gamma_2}{2} = \gamma_2$, a contradiction. 

Finally, combining such choice of $T$ with step 3, \eqref{vw} implies that
\[
v(\cdot,t_1) \leq w(t_1) \leq \gamma_2,
\]
finishing the proof of \eqref{goal_v}.
\end{proof}

\section{Stability and dynamics in the weak competition regime}\label{sec3}

In this section, we will derive the criteria under which the constant steady state \eqref{steady_general} is linearly stable, and obtain the approximate dynamics of the system when \eqref{steady_general} is ``slightly linearly unstable''. Since the system \eqref{simplified}--\eqref{poisson} has many parameters, to make the analysis easier, let us consider the system where the coefficients for $u$ and $v$ are symmetric about each other. Let us set $d_1=d_2=1$, $b_1=b_2=1$, $a_1=a_2=a$, and $\chi_1=\chi_2=\chi>0$. That is, \eqref{simplified} now becomes
\begin{equation}\label{simplified2}
\begin{cases}
\partial_t u  = \Delta u +   \chi \nabla\cdot( u \nabla c[v]) +  u(1 - u - a v) &\text{ in }\Omega \times (0,T), \\
\partial_t v =  \Delta v +   \chi \nabla\cdot( v \nabla c[u]) +  v(1 - v - a u) &\text{ in }\Omega \times (0,T),  \\
\frac{\partial u}{\partial \nu} = \frac{\partial v}{\partial \nu} = 0 & \text{ on } \partial \Omega  \times (0,T),
\end{cases}
\end{equation}
where $c[u]$ and $c[v]$ again solve the elliptic equation \eqref{poisson}. Throughout this section, we assume $a\in(0,1)$ is a fixed parameter in the weakly competition regime.

\subsection{Linear stability analysis for a general domain}\label{linear_stability}
For the system \eqref{simplified2}--\eqref{poisson}, one can easily check that \eqref{steady_general} is still the unique positive steady state, and since we have set that $a_1=a_2=a$, it can be written as
\begin{align}\label{Eq: constant steady state}
\begin{split}
\begin{pmatrix}
\overline{u}\\ 
\overline{v}
\end{pmatrix}\ =\ \frac{1}{a + 1}
\begin{pmatrix}
1\\
1
\end{pmatrix},
\end{split}
\end{align}
whose corresponding chemical concentrations are $c[\overline{ u}]=c[\overline{v}] = \frac{1}{a + 1}$. For a fixed $a\in(0,1)$, in this section we will perform the standard linear stability analysis to determine the linear stability criteria of \eqref{linear_stability} in terms of $\chi$.

Since $\Omega \subset \R^n$ is a bounded domain with smooth boundary, it is well known that the Neumann eigenvalue problem
\begin{equation}\label{eigenvalue}
\begin{cases}
\Delta \varphi + \lambda \varphi = 0 &\text{ in } \Omega\\
\frac{\partial \varphi}{\partial \nu}  = 0 &\text{ on } \partial \Omega
\end{cases}
\end{equation}
has an infinite sequence of eigenfunctions $\{\varphi_k\}_{k=0}^\infty$ with corresponding eigenvalues $0=\lambda_0 < \lambda_1 \leq \dots \leq \lambda_k \leq \dots$, where $\{\varphi_k\}_{k=0}^\infty$ form an orthonormal basis of $L^2(\Omega)$, and $\varphi_0 = |\Omega|^{-1/2}$.  In particular, in the special case $n=1$ and $\Omega=[0,L]$, we have 
\begin{equation}\label{interval}
\varphi_k(x) = \sqrt{2}|\Omega|^{-1/2} \cos(w_k x) ~~\text{ and }~~ \lambda_k = w_k^2 \quad\text{ for }k \in \mathbb{N}^+,
\end{equation} where $w_k := \frac{k\pi}{L}$.

For a solution $(u,v)$ to \eqref{simplified2}--\eqref{poisson}, we can use the above Neumann eigenfunction expansion to write it as
\[
u(x,t) = \overline{u} +  \sum_{k = 0}^{\infty}a_{k}(t)\varphi_k(x),\quad
v(x,t) = \overline{v} +  \sum_{k = 0}^{\infty}b_{k}(t)\varphi_k(x).
\] 
When $(u,v)$ has a small perturbation to the constant steady state \eqref{Eq: constant steady state} (i.e. all $|a_k|,|b_k|\ll 1$ for $k\in N$),
substituting $u(x,t)$ and $v(x,t)$ into \eqref{simplified2}--\eqref{poisson}, for each mode $k\in \mathbb{N}$, $a_k$ and $b_k$ satisfy the ODE
\[
\frac{d}{dt} \begin{pmatrix}a_k(t)\\ b_k(t)  \end{pmatrix} =\mathcal{J}_k  \begin{pmatrix}a_k(t)\\ b_k(t)  \end{pmatrix}+ O(\|u-\overline u\|^2 +\|v-\overline v\|^2),
\]
where $\mathcal{J}_k$ is a $2\times 2$ matrix given by
\begin{equation}\label{def_Jk}
\mathcal{J}_k := \begin{pmatrix} -\lambda_k - \overline{u} & -\chi \overline{u}\frac{ \lambda_k}{1+\lambda_k} - a\overline{u} \\
  -\chi \overline{v}\frac{ \lambda_k}{1+\lambda_k} - a\overline{v}  &  -\lambda_k - \overline{v} 
 \end{pmatrix}\quad\text{ for }k\in\mathbb{N}.
 \end{equation}
The linear stability of the constant steady state $(\overline{u},\overline{v})$ is determined by the eigenvalues of the matrices $\mathcal{J}_k$: namely, $(\overline{u},\overline{v})$ is linearly stable if and only if all eigenvalues of $\mathcal{J}_k$ have negative real parts for all $k\in\mathbb{N}$.

 Due to our assumptions $\chi>0, a> 0$ and the fact that $\overline u = \overline v>0$, each $\mathcal{J}_{k}$ is of the form $\begin{pmatrix} c_k & d_k \\ d_k & c_k\end{pmatrix}$ with $c_k<0$ and $d_k<0$, thus its eigenvalues are $c_k\pm d_k$. Clearly we have $c_k + d_k<0$ for all $k\in \mathbb{N}$. For $c_k-d_k$, we have
 \[
 c_k-d_k = \frac{\chi \lambda_k}{(a+1)(1+\lambda_k)} - \lambda_k + \frac{a-1}{a+1}\quad\text{ for all } k\in \mathbb{N},
 \]
thus $c_0-d_0<0$ (since $\lambda_0=0$), and for $k\in\mathbb{N}^+$ we have $c_k-d_k<0$ if and only if $0<\chi< \chi_k^*$, where
\begin{equation}\label{def_chi_k}
\chi_k^* := \frac{1+\lambda_k}{\lambda_k}(\lambda_k(a+1) + 1-a) = 2 + \frac{1-a}{\lambda_k} + \lambda_k(1+a)\quad\text{ for } k\in \mathbb{N}^+.
\end{equation}
Note that $\chi_k^*>0$ for each $k\in \mathbb{N}^+$, and $\chi_k^* \to \infty$ as $k\to\infty$ (since $\lim_{k\to\infty} \lambda_k = \infty$).
This leads to the following criteria for the linear stability of constant steady state:
\begin{proposition}\label{remark: 1D linear critical}
For the system \eqref{simplified2}--\eqref{poisson} with $\chi>0$, $0<a<1$, the positive constant solution \eqref{Eq: constant steady state} is linearly stable  if $0<\chi < \chi^{*}$, where 
$
\chi^* := \inf_{k\in\mathbb{N}^+}\chi_k^*,
$ with $\chi_k^*$ given by \eqref{def_chi_k}.
\end{proposition}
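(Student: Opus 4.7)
The plan is to observe that essentially all of the computational content is already laid out in the paragraphs preceding the proposition statement, so what remains is mainly to assemble the pieces into a clean stability argument. First I would state that, since the Neumann eigenfunctions $\{\varphi_k\}_{k=0}^\infty$ form an orthonormal basis of $L^2(\Omega)$, any sufficiently regular perturbation of the constant steady state can be expanded into modes, and the linearized dynamics decouple mode-by-mode because the linearization of $\chi_1 \nabla \cdot (u \nabla c[v])$ at the constant state $(\overline u, \overline v)$ with $\overline u = \overline v$ produces only terms of the form $\chi \overline{u}\, \Delta c[\varphi_k b_k] = -\chi \overline{u} \frac{\lambda_k}{1+\lambda_k}\varphi_k b_k$ (using $c[\varphi_k] = \varphi_k/(1+\lambda_k)$ from \eqref{poisson}). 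This is exactly what produces the matrix $\mathcal{J}_k$ in \eqref{def_Jk}, and linear stability of $(\overline u,\overline v)$ is by definition equivalent to all eigenvalues of every $\mathcal{J}_k$ ($k\in\mathbb{N}$) having negative real parts.

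Next I would carry out the eigenvalue analysis, which is clean because of the symmetry $a_1=a_2$, $\chi_1=\chi_2$, $b_1=b_2$, $d_1=d_2$ forced $\mathcal{J}_k$ into the symmetric form $\begin{pmatrix} c_k & d_k \\ d_k & c_k \end{pmatrix}$ with eigenvalues $c_k \pm d_k$. Since $c_k = -\lambda_k - \overline{u} < 0$ and $d_k = -\chi \overline{u}\frac{\lambda_k}{1+\lambda_k}-a\overline u < 0$, the eigenvalue $c_k + d_k$ is automatically negative for every $k\in\mathbb{N}$. It therefore suffices to examine $c_k - d_k$: for $k=0$ we have $\lambda_0=0$, so $c_0 - d_0 = -\overline u + a\overline u = \overline u (a-1) < 0$ since $a \in (0,1)$. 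For $k\geq 1$, a direct computation (using $\overline u = 1/(a+1)$) shows $c_k - d_k < 0$ if and only if $\chi < \chi_k^*$ with $\chi_k^*$ as in \eqref{def_chi_k}.

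Finally I would conclude that the positive constant solution is linearly stable precisely when $\chi < \chi_k^*$ holds \emph{for every} $k \in \mathbb{N}^+$, i.e.\ when $\chi < \chi^* := \inf_{k\in\mathbb{N}^+} \chi_k^*$. The infimum is a strictly positive number because each $\chi_k^* > 0$ and, as noted after \eqref{def_chi_k}, $\chi_k^* \to \infty$ as $k\to\infty$ (thanks to $\lambda_k \to\infty$), so only finitely many modes are relevant in computing the infimum, which is therefore attained. There is no real obstacle here: the only subtlety is to point out that $\mathcal{J}_0$ automatically lies in the stable region for all $\chi>0$ (so it does not enter the infimum), and that the symmetry of the system is what makes the eigenvalue computation transparent without invoking Routh--Hurwitz. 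A proposal for a follow-up remark would be to note that the identification of the marginal mode $k^*$ achieving the infimum is what will drive the weakly nonlinear analysis of Section~\ref{sec_weakly_nonlinear}.
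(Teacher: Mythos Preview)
Your proposal is correct and follows essentially the same approach as the paper: the paper's argument is precisely the mode-by-mode decoupling via the Neumann eigenfunction expansion, the observation that $\mathcal{J}_k$ has the symmetric form $\begin{pmatrix} c_k & d_k \\ d_k & c_k\end{pmatrix}$ with eigenvalues $c_k\pm d_k$, and the computation that $c_k+d_k<0$ always while $c_k-d_k<0$ iff $\chi<\chi_k^*$ (with the $k=0$ mode handled separately). Your added remark that the infimum is actually attained is a minor elaboration beyond what the paper states explicitly, but it is consistent with the paper's use of $\chi^*$ in the subsequent weakly nonlinear analysis.
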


\subsection{Weakly nonlinear analysis for a 1D interval}\label{sec_weakly_nonlinear}
In this subsection, we focus on the case $n=1$ and $\Omega=[0,L]$, and consider the regime $\chi > \chi^*$, where the positive constant steady state \eqref{Eq: constant steady state} is linearly unstable. The goal is to perform weakly nonlinear analysis in the regime $0<\chi-\chi^* \ll 1$, and derive the amplitude equations for system \eqref{simplified2}--\eqref{poisson} using a standard perturbation expansion approach, following \cite[Section 6]{cross2009pattern}.

Let us point out that when $\Omega=[0,L]$, \eqref{interval} gives that $\lambda_k = w_k^2=(\frac{k\pi}{L})^2$, thus 
\[
\chi_k^* = 2 + \frac{1-a}{w_k^2} + (1+a)w_k^2= 2 + \frac{(1-a)L^2}{k^2 \pi^2} + (1+a)\frac{k^2 \pi^2}{L^2}
\] is a convex function in $k$ (for $k\in\mathbb{R}^+$). As a result, in the definition $\chi^* := \inf_{k\in\mathbb{N}^+}\chi_k^*$, the infimum is achieved at either one or two modes. 

In the rest of this section, without loss of generality we assume that the infimum is achieved at only one mode (denote it by $k^* \in \mathbb{N}^+$), which is the case for almost every  $L > 0$. In this case, we have $\chi^* = \chi_{k^*}^*$, and $\chi^* < \chi_k^*$ for all $k\in \mathbb{N}^+ \setminus\{k^*\}$. In particular, when $\chi=\chi^*$, the matrices $\mathcal{J}_k$ defined in \eqref{def_Jk} (call them $\mathcal{J}_k^*$) are invertible for any $k=\N\setminus \{k^*\}$ and have two negative eigenvalues, and is singular only at $k=k^*$ with $(1,-1)^T$ in its kernel.

In the weakly nonlinear regime $\chi = \chi^* + \varepsilon$ with $0<\varepsilon\ll 1$,  we have $\chi>\chi_{k^*}^*$, and $\chi<\chi_k^*$ for all $k\in \mathbb{N}^+ \setminus\{k^*\}$. Therefore, among all matrices $\mathcal{J}_k$ defined in \eqref{def_Jk}, only $\mathcal{J}_{k^{*}}$ has a positive eigenvalue, given by
\begin{align}\label{Eq: disper}
\begin{split}
\sigma(k^*) = \varepsilon \frac{\overline{u}w_{k^*}^{2}}{1 + w_{k^*}^{2}},
\end{split}
\end{align}
and its corresponding eigenvalue is $(1,-1)^T$. It means for a small perturbation near the constant steady state \eqref{Eq: constant steady state}, only the mode $\cos(w_{k^*}x)(1,-1)^T$ is growing exponentially with growth rate $\sigma(k^*)$ that is of order $\varepsilon$, and all the other modes are decaying. 
Our goal is to derive the \emph{amplitude equation} that gives the dynamics for this growing mode.

Since this mode has a tiny growth rate of order $\varepsilon$, we are motivated to define a slow time variable $\tau := \varepsilon t$ that describes the dynamics of the system when $\chi=\chi^*+\varepsilon$ is slightly above the stability threshold. Under the slow time variable, the dynamics in \eqref{simplified2} can be written as
\begin{equation}\label{simplified3}
\begin{cases}
\varepsilon \partial_\tau u  = \partial_{xx} u +   (\chi^*+\varepsilon) \,\partial_x ( u\, \partial_x  c[v]) +  u(1 - u - a v) &\text{ in }[0,L] \times (0,\infty), \\
\varepsilon \partial_\tau v =  \partial_{xx} v +    (\chi^*+\varepsilon)\, \partial_x ( v\, \partial_x  c[u]) +  v(1 - v - a u) &\text{ in }[0,L] \times (0,\infty),  \\
\partial_x u = \partial_x v = 0 & \text{ on }  \{0,L\}  \times (0,\infty).
\end{cases}
\end{equation}

Next we will express $u$ and $v$ as the following expansions in terms of the small parameter $0<\varepsilon \ll 1$:
\begin{align}\label{Eq: expansion}
\begin{split}
\begin{pmatrix}
u(x,\tau)\\ 
v(x,\tau)
\end{pmatrix} = \begin{pmatrix}
\overline{u}\\ 
\overline{v}
\end{pmatrix} + \sum_{j = 1}^{\infty} \varepsilon^{\frac{j}{2}}\begin{pmatrix}
u^{(j)}(x,\tau)\\ 
v^{(j)}(x,\tau)
\end{pmatrix},
\end{split}
\end{align}
where we chose the expansion to be powers of $\varepsilon^{1/2}$, since we expect a pitchfork bifurcation. To see this, using the symmetry between the $u,v$ equation and the fact that the only unstable mode for $0<\varepsilon\ll 1$ is of the form $\cos(w_{k^*}x)(1,-1)^T$, we expect that for small $\varepsilon$ there are two steady states approximately of the form $(\overline u + f, \overline v - f)$ and $(\overline u-f, \overline v+f)$ for some small $f$, suggesting a pitchfork bifurcation.

Again, the Neumann boundary condition in \eqref{simplified2} allows us to expand each $(u^{(j)}(x,T),
v^{(j)}(x,T))^T$ in cosine series:
\begin{align}\label{expansion_j}
\begin{split}
\begin{pmatrix}
u^{(j)}(\tau,x)\\v^{(j)}(\tau,x)
\end{pmatrix} = \sum_{k=0}^\infty \cos(w_k x)\begin{pmatrix}
 a_k^{(j)}(\tau)\\   b_k^{(j)}(\tau)
\end{pmatrix} \quad\text{ for } j\in \mathbb{N}^+.
\end{split}
\end{align}
Once $(u,v)$ is written in the expansion \eqref{Eq: expansion} in terms of $\varepsilon$, their chemicals $(c[u], c[v])^T$ can also be written in an expansion in terms of $\varepsilon$ as 
\begin{align}\label{expansion_j_c0}
\begin{split}
\begin{pmatrix}
c[u(\tau)]\\c[v(\tau)]
\end{pmatrix}= \begin{pmatrix}
\overline{u}\\ 
\overline{v}
\end{pmatrix} + \sum_{j = 1}^{\infty} \varepsilon^{\frac{j}{2}}\begin{pmatrix}
c[u^{(j)}]\\ 
c[v^{(j)}]
\end{pmatrix},
\end{split}
\end{align}
 where for each $j$, using \eqref{poisson} and \eqref{expansion_j}, the term $(c[u^{(j)}], c[v^{(j)}])^T$ is given by
\begin{align}\label{expansion_j_c}
\begin{split}
\begin{pmatrix}
c[u^{(j)}(\tau)]\\c[v^{(j)}(\tau)]
\end{pmatrix} = \sum_{k=0}^\infty \frac{1}{1+w_k^2}\cos(w_k x)\begin{pmatrix}
 a_k^{(j)}(\tau)\\   b_k^{(j)}(\tau)
\end{pmatrix} \quad\text{ for } j\in \mathbb{N}^+.
\end{split}
\end{align}

Next we will plug the expansions \eqref{Eq: expansion} and \eqref{expansion_j_c0} into \eqref{simplified3}, and collect terms of order $\varepsilon^{\frac{j}{2}}$ for each $j \in \N$ respectively. Note that there are no terms of order one in \eqref{simplified3}, and the lowest order terms are of order $\varepsilon^{\frac12}$.

\noindent $\bullet$ \textbf{Order $\varepsilon^{\frac12}$ terms: }
Collecting all terms of order $\varepsilon^{\frac12}$, we have
\begin{align}\label{Eq: order01}
\begin{split}
\begin{pmatrix}
0\\ 
0
\end{pmatrix} =
\begin{pmatrix}
\partial_{xx} u^{(1)} + \chi^* \overline{u} \,\partial_{xx} c[v^{(1)}] - \overline{u}(u^{(1)} + a v^{(1)})\\ 
\partial_{xx} v^{(1)}  + \chi^* \overline{v} \,\partial_{xx} c[u^{(1)}] - \overline{v}(v^{(1)} + a u^{(1)})
\end{pmatrix}. 
\end{split}
\end{align}

Plugging \eqref{expansion_j} and \eqref{expansion_j_c} with $j=1$ into \eqref{Eq: order01}, the equation becomes 
\begin{equation}\label{expansion_temp}
\sum_{k=0}^\infty  \cos(w_k x)  \mathcal{J}_k^*
\begin{pmatrix}
 a_k^{(1)}(\tau)\\   b_k^{(1)}(\tau)
\end{pmatrix}= 
\begin{pmatrix}
0 \\ 0
\end{pmatrix},
\end{equation}
where $\mathcal{J}_k^*$ is the matrix $\mathcal{J}_k$ in \eqref{def_Jk} with $\chi$ replaced by $\chi^*$. As we discussed in the beginning of this subsection, $\mathcal{J}_k^*$ is invertible for all $k\in \mathbb{N}$ except at $k=k^*$, where $\mathcal{J}_{k^*}^*$ has an eigenvalue 0 with corresponding eigenvector $(1, -1)^T$. Combining this fact with the orthogonality of $\{\cos(w_k x)\}_{k\in\mathbb{N}}$, we know that $(a_k^{(1)}(\tau),b_k^{(1)}(\tau))^T\equiv 0$ for all $k\neq k^*$, and for $k=k^*$ we have $(a_{k^*}^{(1)}(\tau),b_{k^*}^{(1)}(\tau))^T$ is a multiple of $(1,-1)^T$ for each $\tau$. Thus $(u^{(1)}, v^{(1)})^T$ must take the form
\begin{align}\label{Eq: solution in order01}
\begin{split}
\begin{pmatrix}
u^{(1)}\\v^{(1)}
\end{pmatrix} = P(\tau) \cos(w_{k^*}x)\begin{pmatrix}
1\\ -1
\end{pmatrix}
\end{split}
\end{align}
where the amplitude $P(\tau)$ can be any function of $\tau$. (Note that this implies that $c[u^{(1)}]=\frac{P(\tau)}{1+w_{k^*}^2} \cos(w_{k^*}x) $, with $c[v^{(1)}]$ being its negative.) As we explained above, our goal is to find the amplitude equation, which is the differential equation that $P(\tau)$ satisfies. In order to do this, we need to go to higher order terms.

\noindent $\bullet$ \textbf{Order $\varepsilon$ terms: }
The equations of order $\varepsilon$ satisfy the following, and note that the right hand side contain quadratic products of the order $\varepsilon^{1/2}$ terms $u^{(1)}, v^{(1)}$:
\begin{align}\label{Eq: order02}
\begin{split}
\begin{pmatrix}
0\\ 
0
\end{pmatrix} = \begin{pmatrix}
\partial_{xx} u^{(2)} + \chi^* \overline{u} \partial_{xx} c[v^{(2)}] - \overline{u}(u^{(2)} + a v^{(2)})\\ 
\partial_{xx} v^{(2)} + \chi^* \overline{v} \partial_{xx} c[u^{(2)}] - \overline{v}(v^{(2)} + a u^{(2)})
\end{pmatrix} + 
\begin{pmatrix}
\chi^* \partial_x (u^{(1)} \partial_x c[v^{(1)}])- u^{(1)} (u^{(1)}+ a v^{(1)})\\ 
\chi^* \partial_x (v^{(1)} \partial_x c[u^{(1)}])- v^{(1)} (v^{(1)}+ a u^{(1)})\end{pmatrix}.
\end{split}
\end{align}
As before, we plug \eqref{expansion_j} and \eqref{expansion_j_c} (with $j=1,2$) into \eqref{Eq: order02}. Note that the first term on the right hand side is identical as the right hand side of \eqref{Eq: order01}, except that all $1$'s in the superscript become $2$'s. Thus it is equal to the left hand side of \eqref{expansion_temp}, except that $a_k^{(1)}$ and $b_k^{(1)}$ are replaced by $a_k^{(2)}$ and $b_k^{(2)}$. To deal with the second term on the right hand side of \eqref{Eq: order02}, we first plug in the expressions of $u^{(1)}$ and $v^{(1)}$ from \eqref{Eq: solution in order01}, then use the trigonometric identities  $\cos(w_{k^*}x)^2 =  \frac{1}{2} + \frac{1}{2} \cos(2w_{k^*} x)$ and $\sin(w_{k^*} x) \cos(w_{k^*}x) = \frac{1}{2} \sin(2w_{k^*}x )$ to write the quadratic products into a linear combination of trigonometric functions. After an elementary computation (which is omitted here), \eqref{Eq: order02} becomes
\begin{align}\label{Eq: order02_temp}
\begin{split}
\begin{pmatrix}
0\\ 
0
\end{pmatrix} =   \sum_{k=0}^{\infty} \cos(w_k x) \mathcal{J}_k^* 
\begin{pmatrix}
 a_k^{(2)}(\tau)\\   b_k^{(2)}(\tau)
\end{pmatrix} + 
P(\tau)^2\left(\frac{a - 1}{2} + c_{1}\cos(2w_{k^*}x) \right)
\begin{pmatrix}
1\\ 
1
\end{pmatrix},
\end{split}
\end{align}
where
\begin{align}\label{Eq: order02 supp}
\begin{split}
c_{1} =  \chi^*\frac{w_{k^*}^{2}}{1 + w_{k^*}^{2}} + \frac{a - 1}{2} = \frac{1-a}{2} + (1+a) w_{k^*}^2 > 0,
\end{split}
\end{align}
and the second identity follows from \eqref{def_chi_k}, which gives  $\chi^*\frac{\lambda_{k^*}}{1 + \lambda_{k^*}}=\lambda_{k^*}(a+1)-(a-1)$ (recall that $\lambda_{k^*}=w_{k^*}^2$). 

Again using orthogonality of $\{\cos(w_k x)\}_{k=0}^\infty$ and the fact that $\mathcal{J}_k^*$ is invertible for $k\neq k^*$, we know that the coefficients $(a_k^{(2)}(\tau), b_k^{(2)}(\tau))\equiv 0$ for all $k$ except three modes: $k=k^*$, $k=0$, and $k=2k^*$. For the  $k=k^*$ term, we have $(a_{k^*}^{(2)}(\tau), b_{k^*}^{(2)}(\tau))^T = Q(\tau) (1,-1)^T$ for some function $Q(\tau)$, where we do not have any more information about $Q(\tau)$. For the $k=0$ and $k=2k^*$ terms, note that \eqref{Eq: order02_temp} leads to the two equations
\[
  \mathcal{J}_{0}^* \begin{pmatrix}
 a_{0}^{(2)}(\tau)\\   b_{0}^{(2)}(\tau) \end{pmatrix} = -\frac{a-1}{2} P(\tau)^2
 \begin{pmatrix}
1\\ 
1
\end{pmatrix} \quad\text{ and } \quad \mathcal{J}_{2k^*}^* \begin{pmatrix}
 a_{2k^*}^{(2)}(\tau)\\   b_{2k^*}^{(2)}(\tau) \end{pmatrix} =- c_1 P(\tau)^2
 \begin{pmatrix}
1\\ 
1
\end{pmatrix}.
\]
Solving the two linear systems of equations using the definitions of $\mathcal{J}_k^*$ in \eqref{def_Jk} with $\chi=\chi^*$ (note that the matrices are easy to invert since they are of the form $\begin{pmatrix} c & d\\ d & c\end{pmatrix}$), we obtain that the order $\varepsilon$ terms are of the form
\begin{align}\label{Eq: order02 solution}
\begin{split}
\begin{pmatrix}
u^{(2)}\\v^{(2)}
\end{pmatrix} = 
P(\tau)^2\left( c_{2}  \cos(2w_{k^*}x) + \frac{a - 1}{2} \right)
\begin{pmatrix}
1\\ 1
\end{pmatrix} + Q(\tau) \cos(w_{k^*}x) \begin{pmatrix}
1\\ -1
\end{pmatrix}
\end{split}
\end{align}
where
\begin{align}\label{Eq: order02 solution supp}
\begin{split}
c_{2} = \frac{c_1}{\chi^*\frac{\overline{u}w_{2k^*}^{2}}{1 + w_{2k^*}^{2}} + w_{2k^*}^{2} + 1} > 0.
\end{split}
\end{align}

\noindent $\bullet$ \textbf{Order $\varepsilon^{\frac32}$ terms: }
Next we collect terms in higher order $\varepsilon^{\frac{3}{2}}$. As we will see, at this order $\frac{d}{d\tau}P(\tau)$ finally appears, which allows us to close the equation for $P(\tau)$. Similar to the computation of the order $\varepsilon$ terms \eqref{Eq: order02}, for order $\varepsilon^{\frac32}$ terms we have
\begin{align}\label{Eq: order03}
\begin{split}
\begin{pmatrix}
\partial_\tau u^{(1)}\\ 
\partial_\tau v^{(1)}
\end{pmatrix} =& \begin{pmatrix}
\partial_{xx} u^{(3)} + \chi^* \overline{u} \partial_{xx} c[v^{(3)}] - \overline{u}(u^{(3)} + a v^{(3)})\\ 
\partial_{xx} v^{(3)} + \chi^* \overline{v} \partial_{xx} c[u^{(3)}] - \overline{v}(v^{(3)} + a u^{(3)})
\end{pmatrix} + 
\begin{pmatrix}
\chi^* \partial_x (u^{(1)} \partial_x c[v^{(2)}])- u^{(1)} (u^{(2)}+ a v^{(2)})\\ 
\chi^* \partial_x (v^{(1)} \partial_x c[u^{(2)}])- v^{(1)} (v^{(2)}+ a u^{(2)})\end{pmatrix}\\
&+ \begin{pmatrix}
\chi^* \partial_x (u^{(2)} \partial_x c[v^{(1)}])- u^{(2)} (u^{(1)}+ a v^{(1)})\\ 
\chi^* \partial_x (v^{(2)} \partial_x c[u^{(1)}])- v^{(2)} (v^{(1)}+ a u^{(1)})\end{pmatrix} + \begin{pmatrix} \bar u \,\partial_{xx} c[v^{(1)}]\\  \bar v \,\partial_{xx} c[u^{(1)}] \end{pmatrix},
\end{split}
\end{align}
where the last term is contributed by the $\varepsilon$ term in $\chi=\chi^*+\varepsilon$.

% For convenience, we focus on the study of $u^{(3)}(x,\tau)$, since the $v$ equation can be done in a parallel way. As we collect the terms of order $\varepsilon^{\frac{3}{2}}$ in \eqref{simplified3} (note that the leading order on the left hand side is of this order), 
Next we plug the expressions for the lower order terms \eqref{Eq: solution in order01} and \eqref{Eq: order02 solution} into above, and again we use the trigonometric identities to rewrite all quadratic products into linear terms. This computation is elementary but somewhat tedious, and we omit it here. Once all terms in \eqref{Eq: order03} are written into a linear combination of cosine series, let us only collect the $\cos(w_{k^*} x)$ terms (which is allowed due to the orthogonality of the cosine series), resulting in the equation
 \begin{align}\label{Eq: order03_temp}
\begin{split}
P'(\tau)
\begin{pmatrix}
1\\-1
\end{pmatrix} = \mathcal{J}_{k^*}^* 
\begin{pmatrix}
 a_{k^*}^{(3)}(\tau)\\   b_{k^*}^{(3)}(\tau)
\end{pmatrix} + \lambda_1 P(\tau)^3\begin{pmatrix}
1\\ -1
\end{pmatrix}+ \lambda_2 P(\tau) \begin{pmatrix}
1\\ -1
\end{pmatrix},
\end{split}
\end{align}
%
%
% we have
%\begin{align}\label{Eq: solution of order03}
%\begin{split}
%\partial_{\tau}u^{(1)} =\, & \partial_{xx} u^{(3)} + \chi^* \overline{u}\,\partial_{xx} c[v^{(3)}] - \overline{u}(u^{(3)} + a v^{(3)}) + \cos(w_{k^*}x)\left(\lambda_{1} P^{3}(T) + \lambda_{2} P(T)\right)\\
%& +  \cos(w_{3k^*}x) g(P(T);\chi^*,k^*),
%\end{split}
%\end{align}
where
\begin{align}\label{Eq: solution of order03 supp}
\begin{split}
\lambda_{1} := \chi^*\left( \frac{(a-1-c_2)w_{k^*}^{2}}{2(1 + w_{k^*}^{2})} - \frac{c_{2} w_{k^*} w_{2k^*}}{2(1 + w_{2k^*}^{2})}\right) - (c_{2} + a - 1);\quad \lambda_{2} := \frac{\overline{u}w_{k^*}^{2}}{1 + w_{k^*}^{2}}>0.
\end{split}
\end{align}
Multiplying $(1,-1)$ on the left to both sides of \eqref{Eq: order03_temp} (and recall that $(1,-1)$ is in the left kernel of $\mathcal{J}_{k^*}^*$, we arrive at a scalar differential equation for $P(\tau)$:%
%Here we omit the exact form for $g(P(T);\chi^*,k^*)$, since it does not contain helpful information for derivation of amplitude equation. From the solvability condition of Fredholm's alternative, we derive the dynamical equation for $P(T)$ such that we have expected non-zero solution $u^{(1)}(x,T)$, $P(T)$ satisfies
\begin{align}\label{Eq: order03 solution}
\begin{split}
\frac{d}{d\tau}P(\tau) =  \lambda_{1}P(\tau)^3 + \lambda_{2}P(\tau),
\end{split}
\end{align} 
with $\lambda_1,\lambda_2$ are given by \eqref{Eq: solution of order03 supp}. 
In order to recover the evolution of the amplitude in the regular time scale $t$, we define $A(t) = \varepsilon^{\frac{1}{2}}P(t)$, which satisfies the amplitude equation
\begin{align}\label{Eq: AMP}
 \begin{split}
 \frac{d}{dt}A(t) =  \varepsilon \lambda_2 A(t) + \lambda_1 A(t)^3.
 \end{split}
\end{align}
%where $\Lambda_{1} = -\lambda_{1}$ and $\Lambda_{2} = \varepsilon \lambda_{2}$. 

As expected, this differential equation undergoes a pitchfork bifurcation at $\varepsilon=0$, %(i.e. at $\chi=\chi^*$)
 where the type of bifurcation (supercritical v.s. subcritical) is determined by the sign of $\lambda_1$ and $\lambda_2$. Note that $\lambda_{2}$ is always positive, whereas the sign of $\lambda_1$ depends on $a$ and $L$ in a rather implicit manner. Using the formula of $\lambda_1$ in \eqref{Eq: solution of order03 supp}, we have numerically checked that $\lambda_1<0$ for all $a\in[0, 1]$ and $L \in [0, 4]$, indicating that \eqref{Eq: AMP} undergoes a supercritical bifurcation in $\varepsilon$ for these parameters. 

Finally, to obtain the dynamical solution $(u(x,t), v(x,t))$ for $0<\varepsilon\ll 1$,  plugging \eqref{Eq: solution in order01} into \eqref{Eq: expansion} (and using the fact that $A(t) = \varepsilon^{\frac{1}{2}}P(t)$), the dynamical solution near the constant steady state can be approximated as follows:
%our solution for $P(t)$ is only valid to the order $O(\varepsilon^{-1})$, so our solution for $u(x, t)$ and $v(x,t)$ are also only valid to this order, and is given by
\begin{align}\label{Eq: multiscale solution}
\begin{split}
\begin{pmatrix}
u(x,t)\\ 
v(x,t)
\end{pmatrix} = \begin{pmatrix}
\overline{u}\\ 
\overline{v}
\end{pmatrix} + A(t)\cos(w_{k^*} x)\begin{pmatrix}
1\\ 
-1
\end{pmatrix} + O(\varepsilon),
\end{split}
\end{align}
where $A(t)$ solves \eqref{Eq: AMP}.

For $\varepsilon>0$, when $\lambda_1<0, \lambda_2>0$, one can easily check that \eqref{Eq: AMP} has three steady states: an unstable steady state zero, and two other stable steady states $\pm \sqrt{\frac{\varepsilon\lambda_{2}}{-\lambda_{1}}}$. Thus for $0<\varepsilon\ll 1$, in addition to the unstable constant steady state $(\overline u, \overline v)$ in \eqref{Eq: constant steady state}, we also expect the system \eqref{simplified2}--\eqref{poisson} to have two stable almost-constant steady states of the form
\begin{align}\label{Eq: stat_sol}
\begin{split}
\begin{pmatrix}
 u_s\\  v_s
\end{pmatrix} = \begin{pmatrix}
\overline u\\ \overline v
\end{pmatrix} \pm \varepsilon^{1/2} \sqrt{\frac{\lambda_{2}}{-\lambda_{1}}} \cos(w_{k^*}x)\begin{pmatrix}
1\\ -1
\end{pmatrix} + O(\varepsilon).
\end{split}
\end{align}

\subsection{Numerical verification of the amplitude equation}\label{sec_num_amplitude}
In this subsection we show the validity of the approximate solution $(u,v)$ in \eqref{Eq: AMP def} by comparing it with the numerical solution of the system \eqref{simplified2}--\eqref{poisson}. We use an implicit Lax-Friedrichs finite-volume scheme to solve the system \eqref{simplified2}--\eqref{poisson}, where we use a fine mesh $\ \Delta x = \Delta t = 10^{-3} $ to ensure the numerical simulations have high accuracy. More details of the numerical scheme can be found in Appendix~\ref{sec_numerical}. For the numerical solution $u(x,t)$, at each time $t$ we define its amplitude $A_{amp}(t)$ as
\begin{align}\label{Eq: AMP def}
\begin{split}
A_{amp}(t) := \left \| u(\cdot,t) - \overline{u} \right \|_{L^\infty(\Omega)},
\end{split}
\end{align}
and we will compare it to the analytical amplitude $A(t)$ which solves \eqref{Eq: AMP}.
\begin{figure}[ht]
	\centering
	\includegraphics[scale = 0.35]{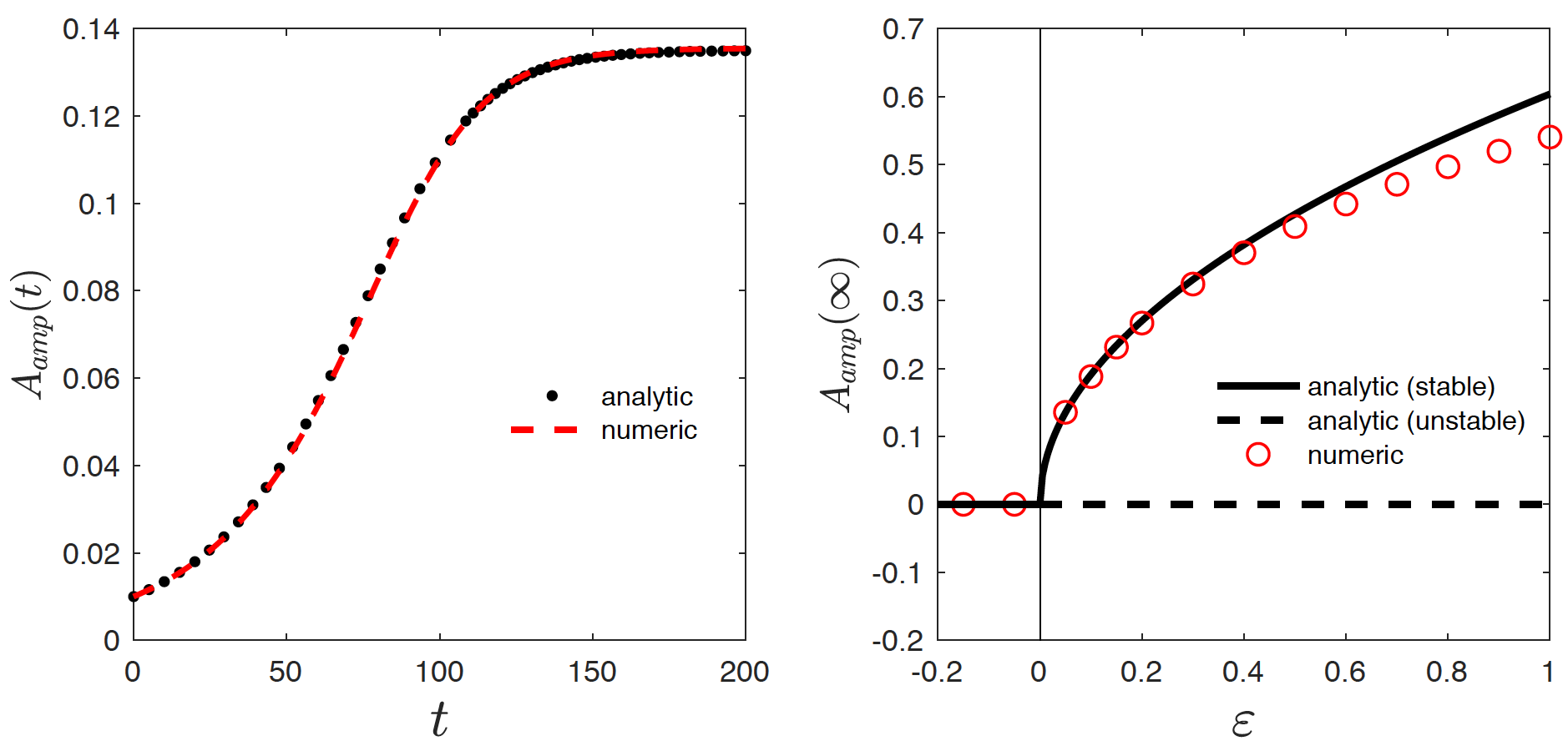}
	\caption{Comparison of the analytic amplitude equation \eqref{Eq: AMP} and the numerical solutions to the full PDE system \eqref{simplified2}--\eqref{poisson}. (Left) Comparison of the solution $A(t)$ to \eqref{Eq: AMP} and numerical amplitude $A_{amp}(t)$ when $\varepsilon = 0.05$. (Right) Comparison of the limiting analytic amplitude $A(\infty)$ (black curve) and the numerical stationary amplitude $A_{amp}(\infty)$ (red circles) for various $\varepsilon<1$, where each red circle corresponds to a numerical simulation for such $\varepsilon$.}
	\label{Fig: fig_1D_verification_amplitude}
\end{figure}\\

In the experiment, we fix the one-dimensional domain as $\Omega = [0, L]$ with $L = 2$, with the competition coefficient $a=0.2$ being in the weak competition regime. By Proposition~\ref{remark: 1D linear critical}, the critical chemotactic coefficient is $\chi^* \approx 5.28$, with the critical wavenumber $k^* = 1$. 
We choose the initial data to be small perturbations of the constant steady state \eqref{Eq: constant steady state}, given by
\begin{align}\label{Eq: ic_perturb}
\begin{split}
\begin{pmatrix}
u(x,0)\\ 
v(x,0)
\end{pmatrix} = \begin{pmatrix}
\overline{u}\\ 
\overline{v}
\end{pmatrix} + A_{0}\text{cos}\left(\frac{k^*\pi}{L}x\right)\begin{pmatrix}
1\\ 
-1
\end{pmatrix}
\end{split}
\end{align}
with the initial amplitude being $A_{0} = 10^{-2}$. We then set $\chi= \chi^* + \varepsilon$ with various choices of $|\varepsilon|<1$, so that the system is in the weakly nonlinear regime. Note that for $a=0.2$ and $L=2$, we have $\lambda_1 < 0$ and $\lambda_2 >0$, implying supercritical bifurcation behavior of \eqref{Eq: AMP} in terms of the parameter $\varepsilon$. 

For each value of $\varepsilon$, we compare the analytical solution $A(t)$ to \eqref{Eq: AMP} (solved by the ODE solver \verb ode45  built in MATLAB) to the numerical amplitude $A_{amp}(t)$ given by \eqref{Eq: AMP def}.  The left of Figure  \ref{Fig: fig_1D_verification_amplitude} shows a comparison between $A_{amp}(t)$ and $A(t)$ for $\varepsilon=0.05$ and $t\in[0,200]$, which shows that the numerical and analytical amplitude are nearly identical to each other when $\varepsilon \ll 1$, confirming the validity of our derivation in Section~\ref{sec_weakly_nonlinear}.

As $t\to \infty$, if $A(0)>0$, since $\lambda_2>0$ and $\lambda_1<0$, \eqref{Eq: AMP} gives that $A(\infty) =0$ if $\varepsilon<0$, and $A(\infty)=(-\frac{\varepsilon \lambda_2}{\lambda_1})^{-1/2}$ if $\varepsilon>0$. For a variety of $\varepsilon$, we perform numerical simulations of \eqref{simplified2}--\eqref{poisson} and record the numerical amplitude of the steady state $A_{amp}(\infty)$. (In the simulation we actually use $A_{amp}(T)$ with $T=200$, by which the numerical solution has already converged to a steady state). The comparison is shown on the right of Figure  \ref{Fig: fig_1D_verification_amplitude}, where each red circle corresponds to a numerical experiment performed at such $\varepsilon$. It shows that $A(\infty)$ and $A_{amp}(\infty)$ have excellent agreement for $\varepsilon\ll 1$, although the error becomes visible as $\varepsilon$ gets closer to 1.

\section{Numerical results in the fully nonlinear regime}\label{sec_simulations}
In this section, we numerically investigate the behavior of solutions of the system \eqref{simplified}--\eqref{poisson}. Most of the study will be focused on the one-dimensional case $\Omega=[0,L]$ with large $\chi_1,\chi_2>0$, where the constant steady state \eqref{steady_general} is linearly unstable, and we are in the fully nonlinear regime. At the end of this section, we also show two numerical simulations of the system in two dimensions in the weakly nonlinear regime and fully nonlinear regime respectively.

\subsection{Steady states in the fully nonlinear regime.} 
We first consider the system \eqref{simplified2}--\eqref{poisson} where the coefficients of the $u,v$ equation are symmetric to each other. %Recall that in Section \ref{sec_weakly_nonlinear}, we showed that if the chemotactic coefficient $\chi$ is in the weakly nonlinear regime $\chi = \chi^*+\varepsilon$ with $0<\varepsilon\ll 1$, we expect the solution to converge to the steady state \eqref{Eq: stat_sol} as $t\to\infty$, where $u,v$ are both periodic solutions with period $\frac{2L}{k^*}$, where $k^*$ is the critical wavenumber. 
We will perform numerical experiments when $\chi>\chi^*$ is in the fully nonlinear regime, i.e. $\chi-\chi^*$ is a large constant. As in Section~\ref{sec_num_amplitude}, we fix $a=0.2$ to be in the weak competition regime. In order to explore the periodic pattern of the solutions, we consider a larger domain $\Omega=[0, L]$ with $L = 30$. For such $L$ and $a$, Proposition~\ref{remark: 1D linear critical} yields that the critical chemotactic coefficient is $\chi^* \approx 3.97$, with the critical wavenumber $k^* = 9$. 
In our experiment we set $\chi = 20$, and set the initial data to be small perturbations of the constant steady state. The evolution of the densities $u$ and $v$ is shown in Figure~\ref{Fig: fig_1D_fullnonlinear_SpatialTime_profile}. The numerical solution suggests that as $t\to\infty$, $(u,v)$ converges to a steady state with a periodic pattern. The steady state is shown in Figure~\ref{Fig: fig_1D_fullnonlinear_SteadyState}(a), where the two densities exhibit a clustering effect due to the large repulsive chemotactic coefficient $\chi$. Although it appears that $u$ and $v$ almost have disjoint support, note that they are both strictly positive in  $\Omega$ by Harnack's inequality for parabolic equations.

 %our intuition is that the species with higher chemotactic sensitivity will result in the sharp local aggregation spike, whereas another species will occupy wider range of habitat. The simulations in Figure \ref{Fig: fig_1D_fullnonlinear_SteadyState} are consistent with our expectations.

\begin{figure}[ht]
	\centering
	\includegraphics[scale = 0.6]{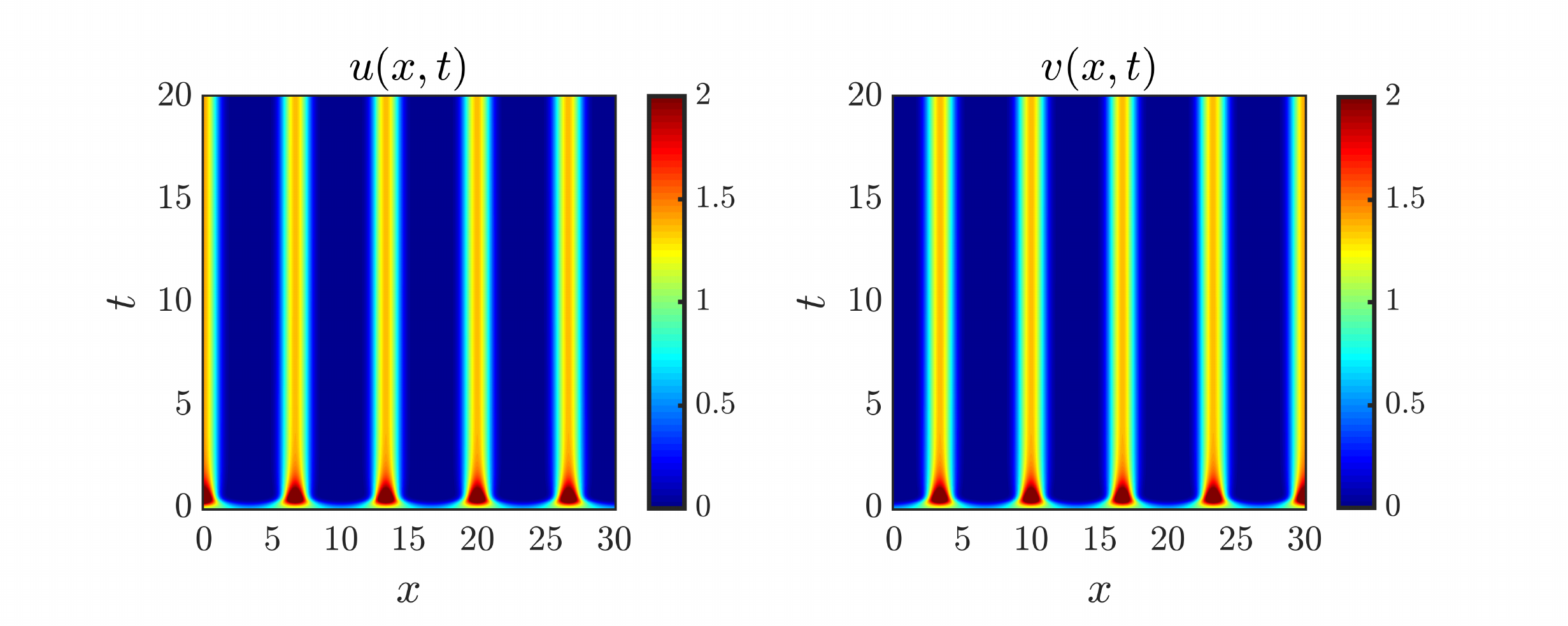}
	\caption{Time evolution of solutions in the fully nonlinear regime. For the system \eqref{simplified2}--\eqref{poisson}, we fix $a=0.2$ and $\chi=20$. Note that such $\chi$ is in the fully nonlinear regime since the critical chemotactic coefficient is $\chi^* \approx 3.97$. The initial density is a small perturbation to $(\overline u, \overline v)$.}
	%The critical chemotaxis parameter and critical wavenumber are computed as $\chi^* \approx 3.97$ and $k^* = 9$ respectively. In simulation, we use $\chi_{1} = \chi_{2} = -20$, which are far from the critical value. The initial densities are given in \eqref{Eq: ic_perturb}.}
	\label{Fig: fig_1D_fullnonlinear_SpatialTime_profile}
\end{figure}

Next we consider the original system \eqref{simplified}--\eqref{poisson} with asymmetric chemotactic coefficients $0<\chi_1<\chi_2$. All the other coefficients for the two equations are symmetric, and remain the same as above (i.e. $d_1=d_2=1$, $a_1=a_2=0.2$, $b_1=b_2=1$). When $\chi_2>\chi_1>0$, $v$ is more sensitive than $u$, thus heuristically one expects that the cluster for $v$ should have a thinner width and higher spike. When the initial data is a small perturbation to the constant solution, the steady state is shown in Figure~\ref{Fig: fig_1D_fullnonlinear_SteadyState}(b), which is consistent with our expectations.

\begin{figure}[ht]
	\centering
	\includegraphics[scale = 0.5]{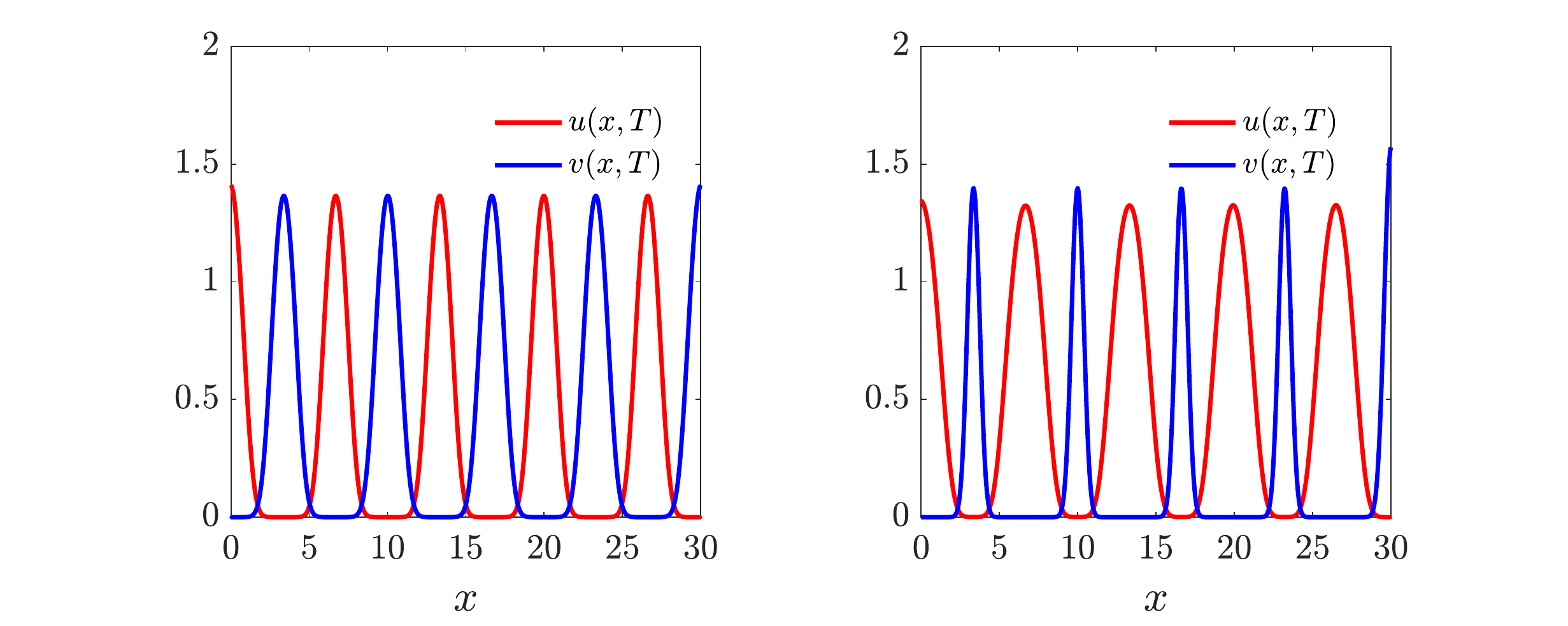}\\
	(a)\hspace{4.5cm}(b)
	\caption{(a)  Steady state for two species with the same chemotactic sensitivity coefficients $\chi=20$. (b) Steady state when $u$ and $v$ have different chemotactic sensitivity coefficients $\chi_1=20$ and $\chi_2=40$. %and different chemotactic sensitivity coefficients (right). (Left) Symmetric steady states with $\chi=20$. (Right) Asymmetric steady states are formed when $u$ and $v$ have different chemotactic sensitivity coefficients, where $\chi_{1} = 20$ and $\chi_{2} = 40$. %Initial densities are given in Eq. (\ref{Eq: ic_perturb}).
	}
	\label{Fig: fig_1D_fullnonlinear_SteadyState}
\end{figure}

\subsection{Traveling wave for segregated initial data.} In this part, we consider initial densities $(u_0,v_0)$ that are segregated, i.e., $\text{supp}\,u_0  = [s_2,L]$ and $\text{supp}\,v_0 = [0,s_1]$, where $0<s_1<s_2<L$. To see the front propagation clearly, we consider a larger domain $\Omega=[0,L]$ with $L=100$.  The initial densities of $u$ and $v$ are chosen to be uniform in their supports and have mass one, i.e.
\begin{align}\label{Eq: 1D, initial density, locally support, disjoint}
\begin{split}
u_0(x) = \frac{\mathbf{1}_{[s_2,L]}(x)}{L-s_2}, \quad
v_0(x) = \frac{\mathbf{1}_{[0,s_1]}(x)}{s_1},
\end{split}
\end{align}
where we fix $s_1=10, s_2=90$. In this experiment, we focus on the strong competition regime $a_1=a_2=2$, with $d_i$ and $b_i$ the same as in last part.

%grow without local competing (resource competition and local gradients of repellents) until they interact at the central regime, naturally it makes sense that two species evolve into two static steady states when they suffer the same force of repulsion (see Figure \ref{Fig: fig_1D_traveling_wave_profiles}). 

%In the case of asymmetric chemotactic effect, there exists constant-velocity traveling wave (see Figure \ref{Fig: fig_1D_traveling_wave_SpatialTime}). Species $u$ (high chemotactic sensitivity) is chased by the species $v$ (low chemotactic sensitivity), the front wave profiles are shown in Figure \ref{Fig: fig_1D_traveling_wave_profiles}.

\begin{figure}[ht]
	\centering
	\includegraphics[scale = 0.55]{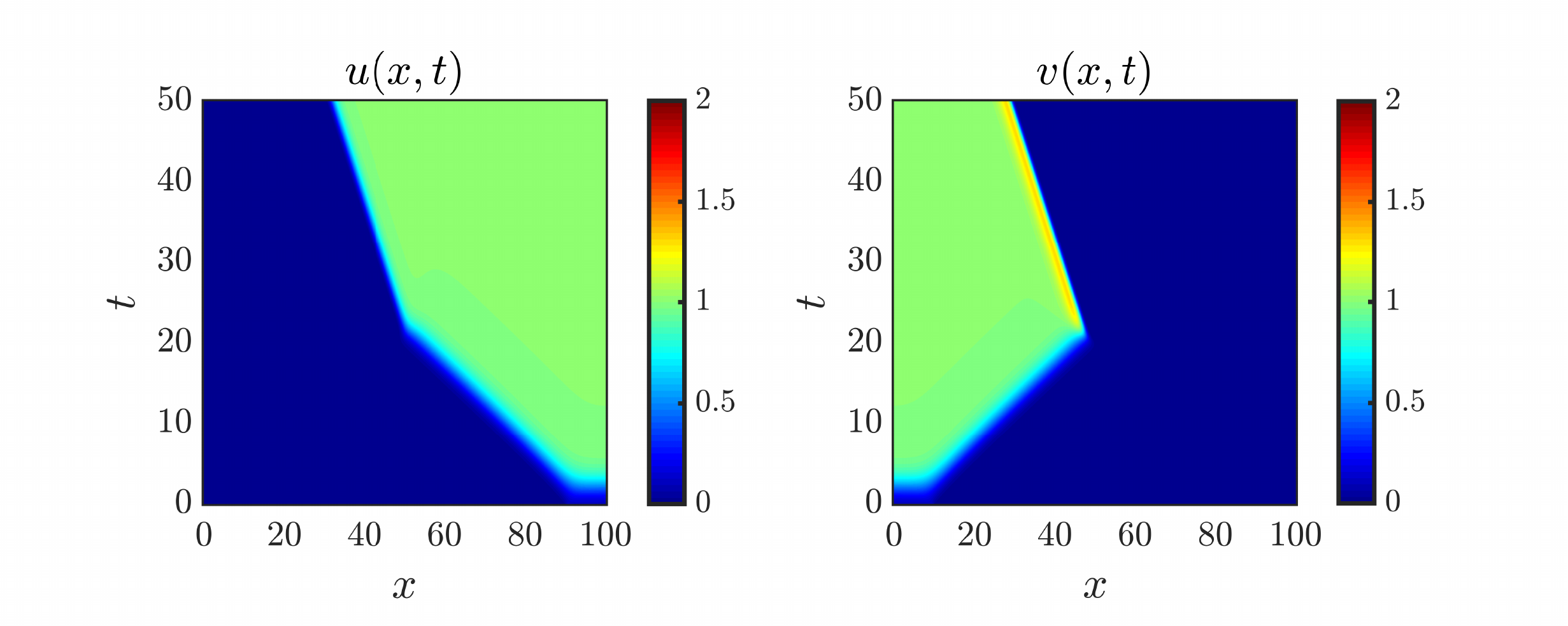}
	\caption{Spatio-temporal evolution of the solution $(u,v)$ with segregated initial data \eqref{Eq: 1D, initial density, locally support, disjoint}, and asymmetric chemotactic coefficients $\chi_1=20$, $\chi_2=80$, and $a=2$ is in the strong competition regime.  After $t \approx 20$, the solution $(u,v)$ forms a traveling wave, which propagates towards the left at a constant velocity.}
	\label{Fig: fig_1D_traveling_wave_SpatialTime}
\end{figure}

\begin{figure}[ht]
	\centering
	\includegraphics[scale = 0.55]{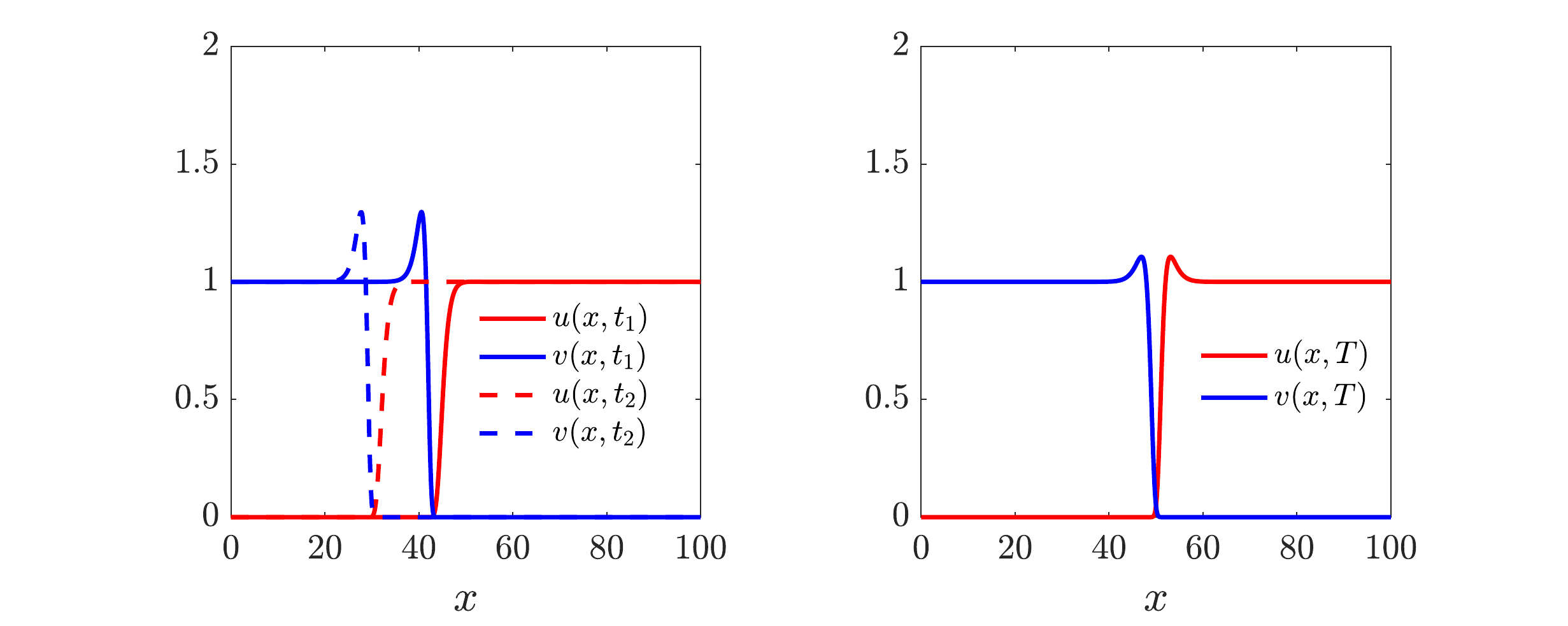}
	\caption{The solution $(u,v)$ with segregated initial data \eqref{Eq: 1D, initial density, locally support, disjoint} and strong competition $a=2$  at different times ($t_{1} = 30$ and $t_{2} = 50$) after early transient profiles.  (Left) For $\chi_1=20$, $\chi_2=80$, the solution $(u,v)$ converges to a traveling wave with negative velocity.  (Right) For $\chi_1=\chi_2=20$, the solution $(u,v)$ converges to a segregated steady state, which can be seen as a traveling wave with velocity 0.}
	\label{Fig: fig_1D_traveling_wave_profiles}
\end{figure}

Since the initial support of $u_0$ and $v_0$ are far apart, for some time the dynamics for both $u$ and $v$ are almost identical to the reaction-diffusion equation for a single species $u_t = u_{xx}+u(1-u)$, where both densities propagate towards the center as traveling waves. As their fronts approach each other, the chemotactic and competition effects start to take place. Numerically, we observe that in the long run, the pair of solutions $(u,v)$ converges to a traveling-wave solution $(\tilde u(x-ct), \tilde v(x-ct))$ with velocity $c$, where $c>0$ if $\chi_1>\chi_2$, and $c<0$ if $\chi_1<\chi_2$. Heuristically, this can be understood as the more sensitive species experiences stronger repulsion and keeps receding (thus becomes invaded by the other species), forming a traveling wave. We remark that when the system \eqref{simplified}--\eqref{poisson} only has one chemical that is emitted by both $u$ and $v$, existence of traveling wave was recently established by \cite{issa2020traveling}.

For asymmetric chemotactic coefficients $\chi_1=20, \chi_2=80$,  Figure~\ref{Fig: fig_1D_traveling_wave_SpatialTime} gives the spatio-temporal evolution for both $u$ and $v$. It shows that once the two densities approach each other (at $t\approx20$), the pair of solutions $(u,v)$ forms a traveling wave with negative velocity as expected.

Figure~\ref{Fig: fig_1D_traveling_wave_profiles} shows the traveling wave profiles for two different choices of chemotactic coefficients. The left figure corresponds to the choice $\chi_1=20$ and $\chi_2=80$, where the traveling wave propagates towards the left. In the right figure we consider identical chemotactic strengths $\chi_1=\chi_2=20$, where the solution converges to a segregated steady state as $t\to\infty$.

\subsection{Front propagation for compactly-supported initial data.} 
In this part, we consider initial densities $(u_0,v_0)$ that are compactly supported, i.e., $\text{supp}\,u_0  = I_1$ and $\text{supp}\,v_0 = I_2$, where $I_2 \subset I_1 \subset [0,L]$. We fix $L=100$, and set $I_1 = [45,55]$ and $I_2=[48,52]$. The initial densities of $u$ and $v$ are chosen to be uniform in their supports, i.e.
\begin{align}\label{Eq: 1D, initial density, locally support}
\begin{split}
u_0(x) = \frac{\mathbf{1}_{I_1 }(x)}{|I_1|}, \quad
v_0(x) = \frac{\mathbf{1}_{I_2}(x)}{|I_2|},
\end{split}
\end{align}
so that the mass of $u_0$ and $v_0$ are both one. 

For such initial data, our first experiment deals with the system \eqref{simplified2}--\eqref{poisson} where  $u$ and $v$ have symmetric cofficients, and we fix $\chi=20$, $a=0.2$. The numerical simulation is shown in Figure~\ref{Fig: fig_1D_rippling_wave}, where the first two figures show the spatio-temporal evolution for both $u$ and $v$, and the third figure gives a snapshot of the two densities at time $t=10$.

\begin{figure}[ht]
	\centering

	\hspace*{-1.2cm}\includegraphics[scale = 0.5]{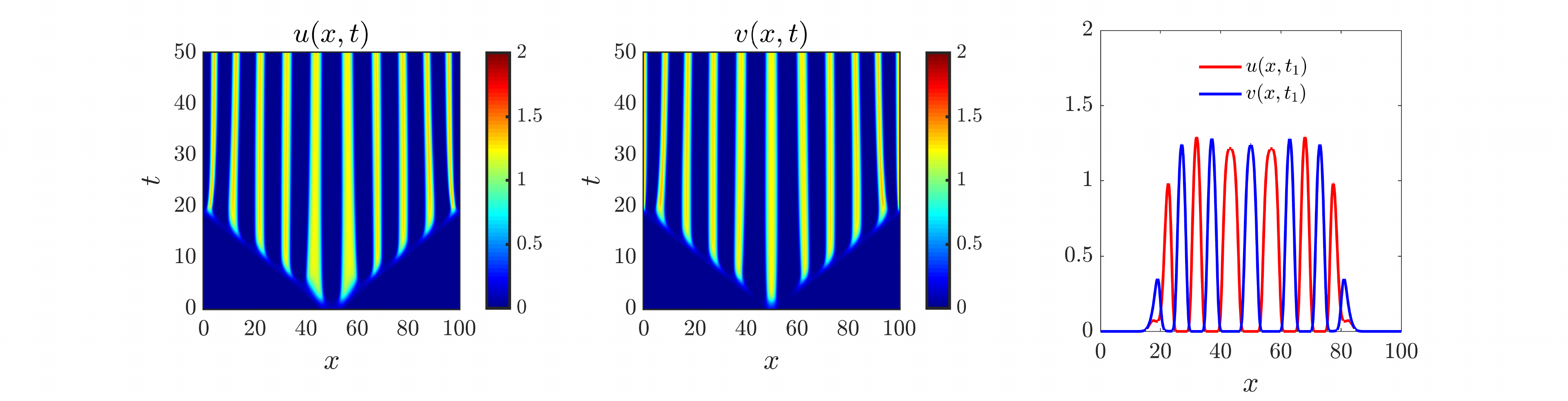}\\
	\caption{Front propagation with compactly-supported initial densities given by \eqref{Eq: 1D, initial density, locally support}.  Here the parameters are $\chi_1=\chi_2=20$, with weak competition $a=0.2$. The first two figures show the spatio-temporal evolution for both $u$ and $v$, and the third figure gives a snapshot of the two densities at time $t=10$. 
	\label{Fig: fig_1D_rippling_wave}}
\end{figure}

Interestingly, as $t$ increases, both densities propagate outwards, with new clusters of $u$ and $v$ forming alternatively over time: once the outmost cluster of $u$ grows to a certain height, a new small cluster of $v$ always starts to form outside of it, and vice versa.
Heuristically, this phenomenon is due to a combination of the diffusion and strong chemotactic repulsion effects. Note that when a cluster of $u$ dominates the front,  there are still some tiny densities of both $u$ and $v$ outside the front, since $u,v$ are both strictly positive at any $t>0$ due to diffusion. As long as the front is dominated by $u$, the tiny population of $v$ outside of the front experiences a large outward drift velocity caused by the strong chemotactic repulsion, which makes it propagate outwards faster than $u$. This additional drift helps $v$ dominate the space outside of the front, where a cluster of $v$ starts to form as the small density $v$ grows exponentially.

In the weak competition regime, when the two species has asymmetric chemotactic coefficients $\chi_1\neq \chi_2$, we observe a similar propagation pattern as above (the numerical simulations are omitted),  except that the clusters of one species are thinner and taller than the other, like in Figure~\ref{Fig: fig_1D_fullnonlinear_SteadyState}(b). Regardless of the pattern, recall that in Theorem~\ref{no_extinction} we proved that for any $\chi_1,\chi_2>0$, neither species would become extinct in the weak competition regime.

\begin{figure}[ht]
	\centering
	\hspace*{-1.2cm}\includegraphics[scale = 0.5]{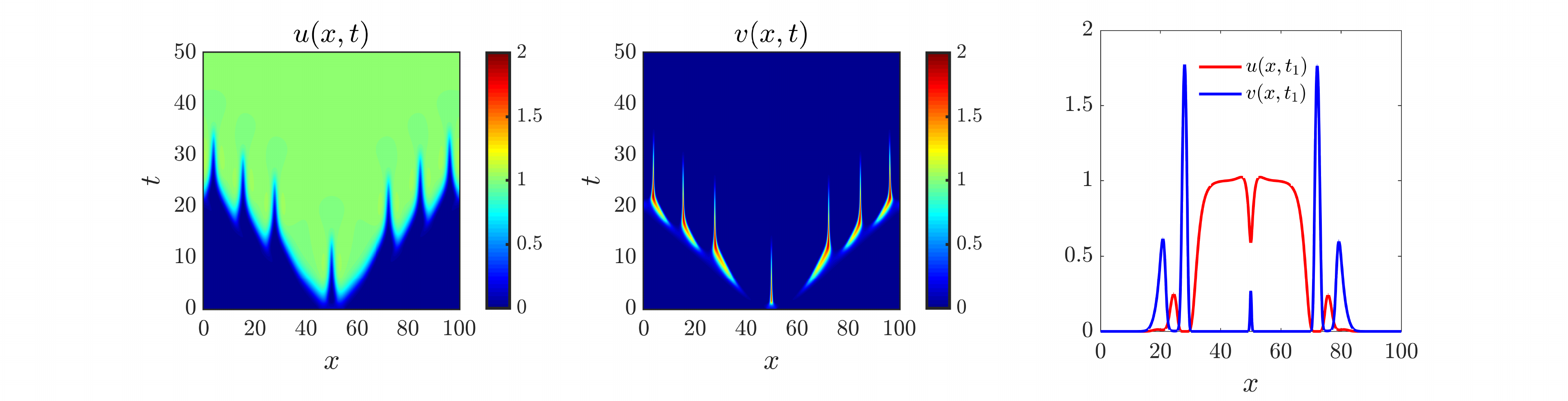}\\
	\caption{Front propagation with compactly-supported initial densities given by \eqref{Eq: 1D, initial density, locally support}.  Here the parameters are $\chi_1=10, \chi_2=100$, with strong competition $a=2$. The first two figures show the spatio-temporal evolution for both $u$ and $v$, and the third figure gives a snapshot of the two densities at time $t=10$. 
	\label{Fig: fig_1D_rippling_wave2}}
\end{figure}

Next we move on to the strong competition regime,  where we set $a_1=a_2=2$ with asymmetric chemotactic coefficients $\chi_1=10, \chi_2=100$. The numerical simulation is shown in Figure~\ref{Fig: fig_1D_rippling_wave2}, where we find that as $t\to\infty$, the more sensitive species $v$ is wiped out by the species $u$.

\subsection{Numerical simulations in two dimensions}\label{num_2d}
In this subsection, we perform two numerical simulations in 2D on the system \eqref{simplified2}--\eqref{poisson} where the coefficients of the $u,v$ equation are symmetric to each other. The first example is done in the weakly nonlinear regime with weak competition, and the initial data is chosen to be close to the constant steady state. 
The second example is done in the fully nonlinear regime with strong competition, and the initial data $u_0, v_0$ are localized Gaussian distributions. For both examples, our domain is a square $\Omega =[0, L]^{2}$ with $L = 30$, which is uniformly divided into a mesh of size $\Delta x = \Delta y = 0.1$, and the time step size is $\Delta t = 0.05$. Our numerical scheme is described in Appendix~\ref{sec_numerical}.

%The computational cost in solving higher dimension system is always high. 
%We are interested in the long time (steady state) patterns, therefore we need a convergence terminator such that the simulation will stop if it reaches the termination criteria. For example, we consider following "distance",
%\begin{align}\label{Eq: error norm 2D}
%\begin{split}
%\tau^{n} = \left \| U^{n+1} - U^{n} \right \|_{F}
%\end{split}
%\end{align}
%where $U^{n}$ is the numerical solution of $u(x,y,t)$ at time $t = t^{n}$, distance is measured by Frobenius norm $\left \|. \right \|_{F}$. Since the numerical scheme (see Eq. (\ref{Eq: scheme final})) is locally second order accuracy in space and time, it makes sense that we design the termination criteria as $\tau ^{n}\ \leq\ 10^{-6}$, which is a good enough based on the mesh size we use.\\
%\\

\textbf{Weakly nonlinear regime in 2D.} 
When $\Omega=[0,L]^2$, the Neumann eigenvalue problem \eqref{eigenvalue} has a family of eigenfunctions $\{\varphi_{k,j}\}_{k,j\in\mathbb{N}}$ where $\varphi_{k,j}=\cos(\frac{k\pi}{L}x)\cos(\frac{j\pi}{L}y)$, with corresponding eigenvalue $\lambda_{k,j}=(\frac{k\pi}{L})^2 + (\frac{j\pi}{L})^2$. Using this family of eigenvalues $\{\lambda_{k,j}\}_{k,j\in\mathbb{N}}$, we can apply Proposition~\ref{remark: 1D linear critical} to obtain the linear stability criteria of the constant steady state \eqref{Eq: constant steady state}. Namely, when $L=30, a=0.5$, Proposition~\ref{remark: 1D linear critical} shows that the critical chemotactic coefficient is $\chi^*  \approx 3.7$, with
 the critical wavenumber $(k^*, j^*) \in \mathbb{N}_{+}^{2}$ given by $k^* = 7$ and $j^* = 2$.

\begin{figure}[ht]
	\centering
	\includegraphics[scale = 0.45]{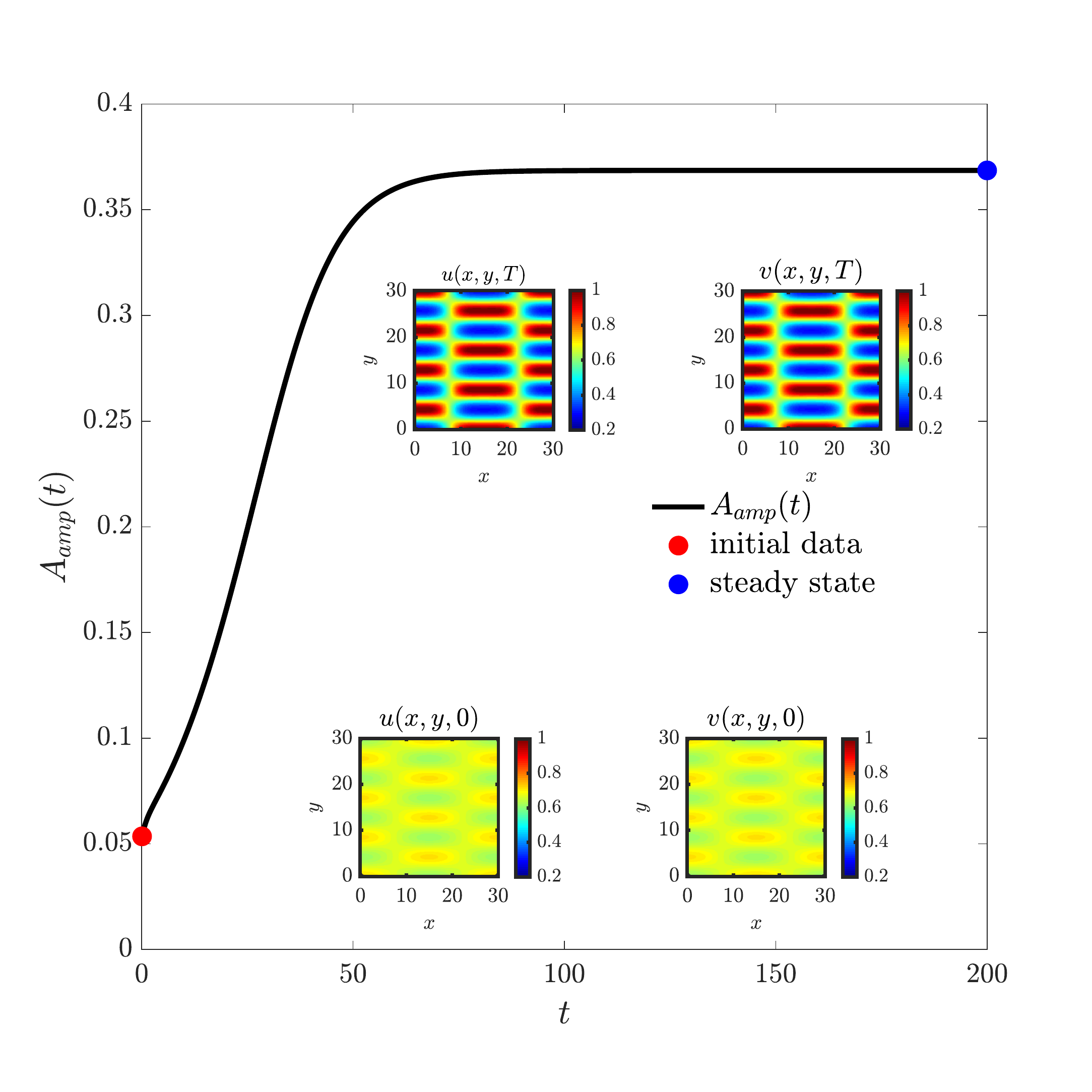}
	\caption{Time evolution of the amplitude $A_{amp}(t)$ in the weakly nonlinear regime for a 2D simulation. }
	\label{Fig: fig_2d_amplitude}
\end{figure}

In our numerical simulation we fix $\chi = 4.7$, which is slightly larger than the critical value $\chi^*$. 
Let us take the initial initial densities to be a small perturbation of the constant steady state \eqref{Eq: constant steady state}, given by
\begin{align}\label{Eq: ic_perturb_2d}
\begin{split}
\begin{pmatrix}
u(x,y, 0)\\ 
v(x,y, 0)
\end{pmatrix} = \begin{pmatrix}
\overline{u}\\ 
\overline{v}
\end{pmatrix} + A_{0}\cos\left(\frac{k^*\pi}{L}x\right)\cos\left(\frac{j^{*}\pi}{L}y\right)\begin{pmatrix}
1\\ 
-1
\end{pmatrix},
\end{split}
\end{align}
where $A_{0} = 0.05$ is the amplitude of initial perturbation.   In Figure~\ref{Fig: fig_2d_amplitude} we track the evolution of the numerical amplitude $A_{amp}(t)$ defined in \eqref{Eq: AMP def}, and also show the spatial patterns of $u$ and $v$ at the initial time and after a long time.
 
\textbf{Wave propagation in 2D.} In this example, we consider the system \eqref{simplified2}--\eqref{poisson} with $a=2$ (strong competition) and $\chi=100$, where the system is in the fully nonlinear regime. Initial densities are given in the form of Gaussian distributions,
\begin{align}\label{Eq: ic_gaussian_2d}
\begin{split}
u(x,y,0) \sim \mathcal{N}\left (\begin{pmatrix}
\frac{L}{2}\\ 
\frac{L}{2}
\end{pmatrix},\  \begin{pmatrix}
\sigma_{1}^{2} & 0\\ 
 0 & \sigma_{1}^{2} 
\end{pmatrix} \right),\quad 
v(x,y,0) \sim \mathcal{N}\left (\begin{pmatrix}
\frac{L}{2}\\ 
\frac{L}{2}
\end{pmatrix},\  \begin{pmatrix}
\sigma_{2}^{2} & 0\\ 
 0 & \sigma_{2}^{2} 
\end{pmatrix} \right)
\end{split}
\end{align}
where $\sigma_{1}^{2} = \frac{1}{4}$ and $\sigma_{2}^{2} = \frac{1}{9}$. The numerical simulations of $u$ and $v$ are illustrated in Figure \ref{Fig: fig_2d_gaussian_propagation}. We observe that the two densities propagate outwards in ring patterns for a short time, and more complicated patterns start to form once the densities reach the boundary of the domain. Finally, species $u$ occupies most of the space in the square domain, with species $v$ suppressed at the center of domain with a high concentration.

 \begin{figure}[ht]
	\centering
	\includegraphics[scale = 0.45]{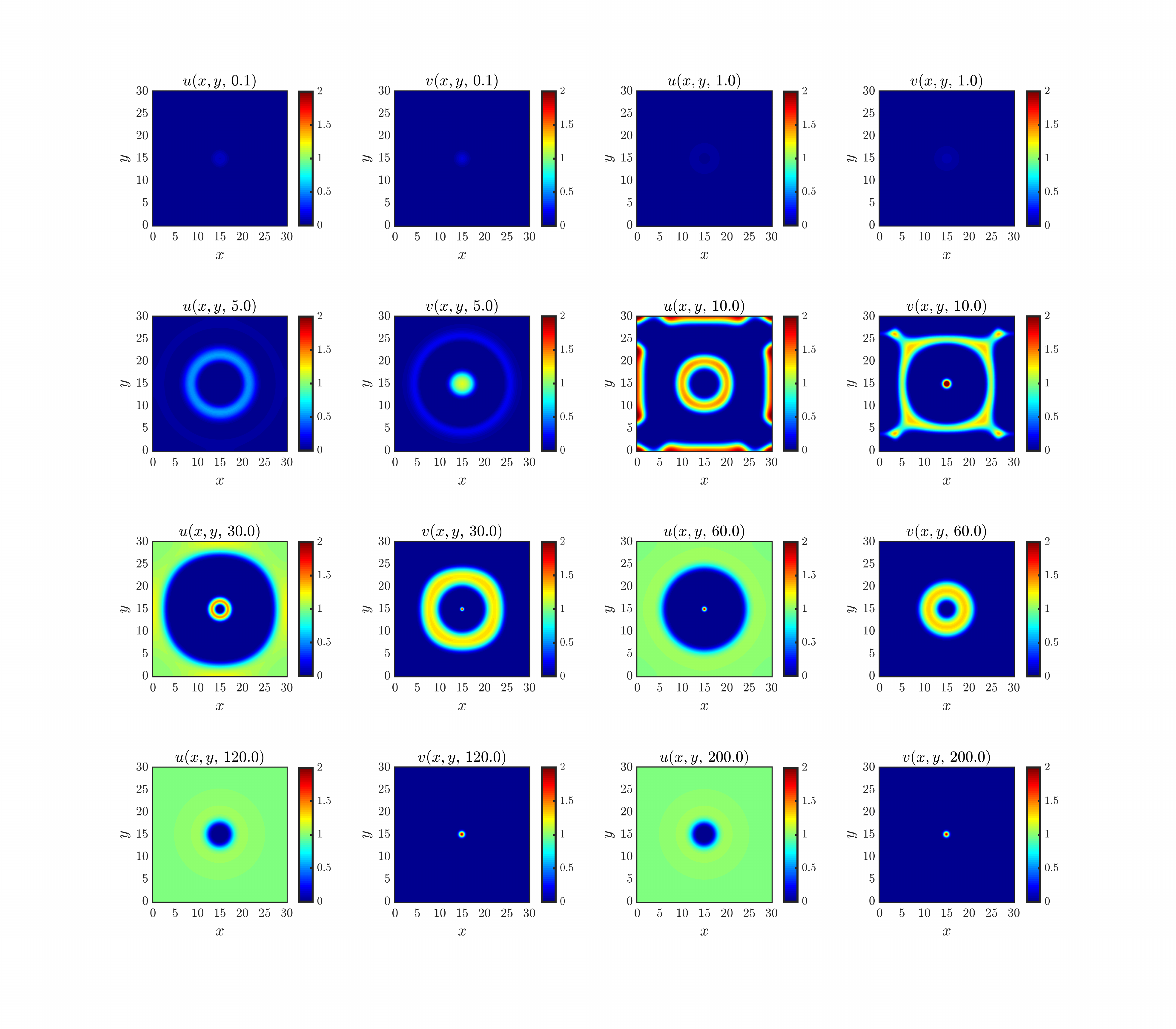}
	\caption{
Time evolution of two species with Gaussian distributed initial data in the fully nonlinear regime $\chi=100$, with strong competition $a=2$. The images show the numerical solutions at $t = 0.1, 1, 5, 10, 30, 60, 120$ and $200$. When $t > 120$, the system is approaching the stationary state, the density profiles of two species at $t = 120$ and $t = 200$ are nearly identical.\label{Fig: fig_2d_gaussian_propagation}
}
	\end{figure}

\begin{appendix}
\section{Numerical scheme}\label{sec_numerical}
In this section we describe the numerical scheme we use for the numerical simulations in Section~\ref{sec_num_amplitude} and Section~\ref{sec_simulations}. 
We will only describe the numerical scheme in two dimensions, since the numerical scheme in one dimension is similar and easier. Our numerical method is a semi-implicit finite volume scheme. % based on ([Ref]) \textcolor{red}{(I'll take care of the reference)}.
% Implicit schemes in general tend to produce reliable results for large time step sizes, however, it is difficult to generate higher order scheme in pratical. 
The system \eqref{simplified2}--\eqref{poisson} is solved on the square domain $\Omega = [0,L]^{2} \subset \mathbb{R}^{2}$ and we uniformly divide it into $N^{2}$ cells $\{\mathcal{C}_{j,k}\}$ using a Cartesian mesh, where
\begin{align}\label{Eq: cell}
\begin{split}
\mathcal{C}_{j,k} := \big[x_{j-\frac{1}{2}},x_{j+\frac{1}{2}}\big] \times \big[y_{k-\frac{1}{2}},y_{k+\frac{1}{2}}\big] \quad\text{ for } j,k = 1,2,...,N.
\end{split}
\end{align}
Each cell has size $|\mathcal{C}_{j,k}| = \Delta x \Delta y$ and the cell $\mathcal{C}_{j,k}$ is centered at $(x_{j},y_{k})$, where $\Delta x = x_{j+\frac{1}{2}} - x_{j-\frac{1}{2}}$ and $\Delta y = y_{k+\frac{1}{2}} - y_{k-\frac{1}{2}}$ for all $j, k = 1,2,...N$ respectively. We also discretize the time interval $[0,T]$ as $t^{n} = n\Delta t$, where $n = 0,1,...,\frac{T}{\Delta t}$.

The computed quantities are the cell averages. For a density $\rho(x,y,t)$ that is either $u$ or $v$, let us denote by $\overline{\rho}_{j,k}(t)$ the spatial average of $\rho(t)$ over the cell $\mathcal{C}_{j,k}$, i.e.,
\begin{align}\label{Eq: cell avg}
\begin{split}
\overline{\rho}_{j,k}(t) = \frac{1}{|\mathcal{C}_{j,k}|}\iint_{\mathcal{C}_{j,k}} \rho(x,y,t)\ dx dy.
\end{split}
\end{align}
 Note that the chemotaxis term gives the $\rho$ equation a flux $\eta = \chi \rho \nabla c$, where $c$ is $c[v]$ if $\rho=u$, and $c=c[u]$ if $\rho=v$.   Integrating \eqref{simplified2} over a spatial-temporal cell $ \mathcal{C}_{j,k}\times [t^{n},t^{n+1}]$, we arrive at the fully discrete, fully implicit scheme
\begin{align}\label{Eq: scheme}
\begin{split}
\frac{\overline{\rho}_{j,k}^{n+1} - \overline{\rho}_{j,k}^{n}}{\Delta t} &=  - \left( \frac{\eta_{j+\frac{1}{2},k}^{(x)} - \eta_{j-\frac{1}{2},k}^{(x)}}{\Delta x}   + \frac{\eta_{j,k+\frac{1}{2}}^{(y)} - \eta_{j,k-\frac{1}{2}}^{(y)}}{\Delta y} \right)\\ &+
\left(\frac{\overline{\rho}_{j-1,k}^{n+1} - 2\overline{\rho}_{j,k}^{n+1} + \overline{\rho}_{j+1,k}^{n+1}}{\Delta x^2} + \frac{\overline{\rho}_{j,k-1}^{n+1} - 2\overline{\rho}_{j,k}^{n+1} + \overline{\rho}_{j,k+1}^{n+1}}{\Delta y^2}\right) + f^{n+1}_{j,k}.
\end{split}
\end{align}
Here $\eta_{j\pm \frac{1}{2},k}^{(x)}$ and $\eta_{j,k\pm \frac{1}{2}}^{(y)}$ are the numerical fluxes on cell boundaries, and $f^{n+1}_{j,k}$ are the nonlinear terms due to competition. The numerical fluxes on cell boundaries are computed using the Lax-Friedrichs numerical flux: for example, the numerical flux $\eta_{j+\frac{1}{2},k}^{(x)}$ is estimated as
\begin{align}\label{Eq: flux}
\begin{split}
\eta_{j+\frac{1}{2},k}^{(x)}(\overline{\rho}^{n+1}_{j,k},\overline{\rho}^{n+1}_{j+1,k}) = \frac{1}{2}\left(\phi(\overline{\rho}^{n+1}_{j,k}) + \phi(\overline{\rho}^{n+1}_{j+1,k})\right) -\frac{\Delta x}{2 \Delta t}(\overline{\rho}^{n+1}_{j+1,k} - \overline{\rho}^{n+1}_{j,k}) 
\end{split}
\end{align}
where
\begin{align}\label{Eq: flux supp}
\begin{split}
\phi(\overline{\rho}^{n+1}_{j,k}) = \chi\ \overline{\rho}^{n+1}_{j,k} \left(\frac{\overline c_{j+1,k}^{n} - \overline c_{j-1,k}^{n}}{2\Delta x}\right).
\end{split}
\end{align}
The average chemical density $\{\overline c_{j,k}^n\}$ on the cells are recovered from either $\{\overline u_{j,k}^n\}$ or $\{\overline v_{j,k}^n\}$ by solving the equation \eqref{poisson} with a standard central difference scheme, which is implemented with an efficient iterative method.

Since the terms $f^{n+1}_{j,k}$ in \eqref{Eq: scheme} is a nonlinear function of $\overline u^{n+1}_{j,k}$ and $\overline v^{n+1}_{j,k}$, to avoid solving a nonlinear system of equations at each iteration, we linearize it as% to improve computational efficiency, this can be viewed as the linearization. In doing so, we write $f^{n+1}_{j,k}$ as linearized approximation,
\begin{align}\label{Eq: linearization}
\begin{split}
f_{j,k}^{n+1} \approx f(\overline{u}_{j,k}^{n},\overline{v}_{j,k}^{n}) + \frac{\partial f}{\partial u}\Big |_{(\overline{u}_{j,k}^{n},\overline{v}_{j,k}^{n})}(\overline{u}^{n+1}_{j,k} - \overline{u}^{n}_{j,k}) + \frac{\partial f}{\partial v}\Big |_{(\overline{u}_{j,k}^{n},\overline{v}_{j,k}^{n})}(\overline{v}^{n+1}_{j,k} - \overline{v}^{n}_{j,k})
\end{split}
\end{align}
 to lower the computation costs, and note that the linearization does not affect the first order scheme accuracy. 
 
From our discussion above, for $j,k=1,\dots,N$ and $\rho=u,v$, \eqref{Eq: scheme} becomes a system of linear equations with the unknowns being $\overline u_{j,k}^{n+1}$ and $\overline v_{j,k}^{n+1}$. To reduce the computational cost, in practice we implement an incomplete alternating directional implicit (ADI) method, which we describe below. To shorten the notations, let us introduce the following standard spatial difference operators
\begin{align}\label{Eq: opt spatil}
\begin{split}
\mathcal{D}_{x}(\overline{\rho}_{j,k}) := \frac{\eta_{j+\frac{1}{2},k}^{(x)} - \eta_{j-\frac{1}{2},k}^{(x)}}{\Delta x}\quad ;\quad \mathcal{D}_{xx}(\overline{\rho}_{j,k}) := \ \frac{\overline{\rho}_{j-1,k} - 2\overline{\rho}_{j,k} + \overline{\rho}_{j+1,k}}{\Delta x^2}.
\end{split}
\end{align}
We define $\mathcal{D}_{y}(.)$ and $\mathcal{D}_{yy}(.)$ in the same way.  Thus by moving all unknowns of \eqref{Eq: scheme}  (at time step $n+1$) to the left-hand side, we can rearrange the terms so that  can be suppressed to a compact form in terms of operators we defined above,
\begin{align}\label{Eq: compact scheme}
\begin{split}
\overline{\rho}^{n+1} - \left( (\mathcal{D}_{xx} - \mathcal{D}_{x}) + (\mathcal{D}_{yy} - \mathcal{D}_{y})+ \frac{d}{d\rho}f(\overline{\rho}^{n})\right) \overline{\rho}^{n+1}\Delta t = \overline{\rho}^{n} + f(\overline{\rho}^{n})\Delta t - \frac{d}{d\rho}f(\overline{\rho}^{n})\overline{\rho}^{n}\Delta t,
\end{split}
\end{align}
where the right hand side only contains terms that are known (at the time step $n$).

Let us split the operators into different directions, denoted by
\begin{align}\label{Eq: scheme operators}
\begin{split}
\mathcal{A}^{(x)} := \mathcal{D}_{xx} - \mathcal{D}_{x},\ \mathcal{A}^{(y)} := \mathcal{D}_{yy} - \mathcal{D}_{y},\ \mathcal{A}^{'} := \frac{d}{d\rho}f(\overline{\rho}^{n}).
\end{split}
\end{align}
Let us perform the imperfect factorization of of \eqref{Eq: compact scheme}:
\begin{align}\label{Eq: scheme factor}
\begin{split}
\left ( \mathcal{I} - (\mathcal{A}^{(x)} + \frac{1}{2}\mathcal{A}^{'})\Delta t \right )\left(\mathcal{I} - (\mathcal{A}^{(y)} + \frac{1}{2}\mathcal{A}^{'})\Delta t \right)\overline{\rho}^{n+1} + O(\Delta t^2) = \text{RHS of \eqref{Eq: compact scheme}},
\end{split}
\end{align}
where $\mathcal{I}$ is the identity operator. Since the numerical scheme \eqref{Eq: scheme} is a first order scheme, we do not lose any accuracy from the factorization procedure. Omitting the $O(\Delta t^2)$ terms in \eqref{Eq: scheme factor}, we can then write numerical integration process from $t^{n}$ to $t^{n+1}$ in two steps,
\begin{align}\label{Eq: scheme final}
\begin{split}
\left( \mathcal{I} - (\mathcal{A}^{(x)} + \frac{1}{2}\mathcal{A}^{'})\Delta t\right)\rho^{*} & = \text{RHS of \eqref{Eq: compact scheme}}\\
\left(\mathcal{I} - (\mathcal{A}^{(y)} + \frac{1}{2}\mathcal{A}^{'})\Delta t \right)\overline{\rho}^{n+1} & = \rho^{*} 
\end{split}
\end{align}
where $\rho^{*}$ is an intermediate solution. Finally, we use the Thomas algorithm (tridiagonal matrix algorithm)  to solve the linear systems in \eqref{Eq: scheme final}. %It's easy to adapt the 2D numerical scheme (see Eq. (\ref{Eq: scheme})) to 1D system (\ref{model1}).

\end{appendix}

\bibliographystyle{abbrv}
\bibliography{references}

\vspace{1cm}
\begin{tabular}{l}
\textbf{Guanlin Li}\\
{Interdisciplinary Graduate Program in Quantitative Biosciences} \\
{Georgia Institute of Technology} \\
{Atlanta, GA 30332, USA} \\[0.2cm]
{and} \\[0.2cm]
{School of Physics} \\
{Georgia Institute of Technology} \\
{Atlanta, GA 30332, USA} \\
{Email: guanlinl@gatech.edu} \\ \\
\textbf{Yao Yao} \\
 {School of Mathematics}\\
 {Georgia Institute of Technology} \\
{Atlanta, GA 30332, USA} \\
 {Email: yaoyao@math.gatech.edu}\\
\end{tabular}
\end{document}